\newcommand{\thmref}[1]{Theorem~\ref{#1}}
\newcommand{\propref}[1]{Proposition~\ref{#1}}
\newcommand{\lemref}[1]{Lemma~\ref{#1}}
\newcommand{\corref}[1]{Corollary~\ref{#1}}
\theoremstyle{plain}
    \newtheorem{thm}{Theorem}[section]
    \newtheorem{lem}[thm]   {Lemma}
    \newtheorem{cor}[thm]   {Corollary}
    \newtheorem{prop}[thm]  {Proposition}
\theoremstyle{definition}
    \newtheorem{defn}[thm]  {Definition}
    \newtheorem{ex}[thm]{Example}
    \newtheorem{rem}[thm]{Remark}
\def\cat{\mathsf{cat}}
\newcommand{\be}{\begin{enumerate}}
\newcommand{\ee}{\end{enumerate}}
\newcommand{\R}{\mathbb{R}}
\newcommand{\Z}{\mathbb{Z}}
\newcommand{\C}{\mathbb{C}}
\newcommand{\Q}{\mathbb{Q}}
\newcommand{\TC}{{\sf TC}}
\newcommand{\cld}{{\sf cd}}
\begin{document}

\title[Topological complexity of subgroups of braid groups]{Topological complexity of subgroups of Artin's braid groups}

\author{Mark Grant}
\author{David Recio-Mitter}

\address{Institute of Mathematics,
Fraser Noble Building,
University of Aberdeen,
Aberdeen AB24 3UE,
UK}

\email{mark.grant@abdn.ac.uk}

\email{david.reciomitter@abdn.ac.uk}

\date{\today}

\keywords{Topological complexity, aspherical spaces, Lusternik-Schnirelmann category, cohomological dimension, topological robotics, braid groups}
\subjclass[2010]{55M99, 55P20 (Primary); 55M30, 20J06, 68T40 (Secondary).}

\begin{abstract} We consider the topological complexity of subgroups of Artin's braid group consisting of braids whose associated permutations lie in some specified subgroup of the symmetric group. We give upper and lower bounds for the topological complexity of such mixed braid groups. In particular we show that the topological complexity of any subgroup of the $n$-strand braid group which fixes any two strands is $2n-3$, extending a result of Farber and Yuzvinsky in the pure braid case. In addition, we generalise our results to the setting of higher topological complexity.
\end{abstract}


\maketitle
\section{Introduction}\label{sec:intro}

Topological complexity is a numerical homotopy invariant, introduced by Farber in the course of his topological study of the robot motion planning problem \cite{Far03, Far06}. For any space $X$, the number $\TC(X)$ is defined to be the sectional category of the free path fibration on $X$, and as such gives a quantitative measure of the complexity of navigation in $X$. Computation of $\TC(X)$ for a given space $X$ can be delicate, but is often achievable by combining cohomological lower bounds (in terms of the zero-divisors cup-length \cite[Theorem 7]{Far03}) with upper bounds coming from obstruction theory or the specific geometry of the space at hand.

A class of spaces for which the computation of topological complexity presents a unique challenge are the Eilenberg--Mac Lane spaces $K(\pi,1)$, for $\pi$ a torsion-free discrete group. A description of $\TC(\pi)\coloneqq \TC(K(\pi,1))$ in terms of algebraic properties of the group $\pi$ (as requested by Farber in \cite{Far06}) seems to be out of reach at present. In certain cases one can often compute the exact value of $\TC(\pi)$ using the bounds mentioned above. There are standard bounds $\cld(\pi)\leq \TC(\pi)\leq 2\, \cld(\pi)$ in terms of the cohomological dimension of $\pi$. There are also sharper bounds in terms of cohomological dimensions of certain subgroups or quotient groups of $\pi\times \pi$ (see \cite[Theorem 1.1]{GLO} and \cite[Proposition 3.7]{G}, reproduced as Theorems \ref{thm:lowerbound} and \ref{thm:upperbound} below) which can be of use when the cohomological or obstruction-theoretic bounds are either insufficient, or computationally infeasible.

In this paper we investigate the topological complexity of certain subgroups of Artin's braid groups. Recall that the full braid group $B_n$, the pure braid group $P_n$ and the symmetric group $\mathfrak{S}_n$ fit into an extension
\[
\xymatrix{
1 \ar[r] & P_n \ar[r] & B_n \ar[r]^-{\pi} & \mathfrak{S}_n \ar[r] & 1}
\]
where the projection $\pi$ sends a braid to the associated permutation of its endpoints (definitions will be given in Section \ref{Braidgroups} below). Given any subgroup $G\le \mathfrak{S}_n$, its pre-image $B_n^G\coloneqq\pi^{-1}(G)$ is a subgroup of $B_n$ containing $P_n$. The cohomological dimension of $B_n^G$ is $n-1$, and so by the standard bounds mentioned above we have $n-1\le \TC(B_n^G)\le 2n-2$. We will prove the following.

\begin{thm}\label{thm:introupper}
Suppose that $G\leq \mathfrak{S}_n$ satisfies either of the following conditions:
\begin{itemize}
\item $G\leq\mathfrak{S}_{n-k}\times \mathfrak{S}_k$ where $(n,k)=(n-1,k)=(n-1,k-1)=1$, or
\item $G\leq \mathfrak{S}_{n-2}\times \{1\}^2$.
\end{itemize}
Then we have
\[
\TC(B_n^G)\leq 2n-3.
\]
\end{thm}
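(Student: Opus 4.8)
The plan is to establish the upper bound $\TC(B_n^G) \le 2n-3$ by applying the upper bound theorem referenced in the excerpt (Theorem~\ref{thm:upperbound}, cited from \cite{G}). That theorem expresses $\TC(\pi)$ in terms of the cohomological dimension of a carefully chosen subgroup of $\pi \times \pi$, so the task reduces to producing an appropriate subgroup $H \le B_n^G \times B_n^G$ and bounding its cohomological dimension by $2n-3$, together with checking whatever hypothesis the theorem imposes (typically that $H$ contains the diagonal-related data controlling the sectional category estimate). Since $\cld(B_n^G) = n-1$, the generic upper bound is $2(n-1) = 2n-2$, so I need to shave off exactly one dimension, and the two hypotheses on $G$ are precisely what should make this possible.

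First I would recall that $B_n^G$ is a $K(\pi,1)$ of cohomological dimension $n-1$, realized geometrically (the braid groups act freely on contractible configuration-type spaces), and that $B_n^G \times B_n^G$ therefore has cohomological dimension $2n-2$. The key is that the upper bound theorem lets one replace the full product by a subgroup whose cohomological dimension can be smaller. I would look for a subgroup $H \le B_n^G \times B_n^G$ that still contains enough of the diagonal to satisfy the theorem's hypothesis but whose cohomological dimension drops to $2n-3$; the natural candidate is the preimage, under $\pi \times \pi$, of a suitable subgroup of $G \times G$ chosen so that the two number-theoretic coprimality conditions $(n,k)=(n-1,k)=(n-1,k-1)=1$ (respectively the fixed-strands condition $G \le \mathfrak{S}_{n-2} \times \{1\}^2$) force the relevant cohomology to vanish in top degree.

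The two cases should be handled by the same mechanism but with different arithmetic input. In the second case, $G \le \mathfrak{S}_{n-2}\times\{1\}^2$ means every braid in $B_n^G$ fixes the last two strands, so one has extra "room": the configuration space splits off factors corresponding to the two fixed points, and this product structure should let me build an explicit subgroup of the product realizing cohomological dimension $2n-3$ (this is the mechanism underlying the Farber--Yuzvinsky pure braid computation, which is the $G = \{1\}$ special case being extended). In the first case, the coprimality conditions are exactly the hypotheses under which a Sylow-type or transfer argument shows that a particular top-degree cohomology class (a product of zero-divisor classes, or equivalently the obstruction to lowering the dimension) is either non-torsion-obstructed or else killed, allowing the dimension count to come out at $2n-3$ rather than $2n-2$.

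The main obstacle I anticipate is verifying that the chosen subgroup $H$ genuinely satisfies the hypothesis of the upper bound theorem \emph{while simultaneously} having the correct cohomological dimension --- these two requirements pull in opposite directions, since making $H$ large enough to satisfy the section-existence condition tends to push its cohomological dimension back up to $2n-2$. Threading this needle is precisely where the arithmetic conditions on $G$ must enter, and I expect the coprimality hypotheses $(n,k)=(n-1,k)=(n-1,k-1)=1$ to be used to guarantee that a certain restriction map on cohomology (or an associated transfer composite) is an isomorphism in the critical degree, thereby certifying that the dimension genuinely drops. A secondary technical point will be computing $\cld(H)$ explicitly, which should follow from the known structure of $B_n^G$ as an iterated extension together with standard dimension formulae for group extensions, but which requires care to ensure the bound is tight rather than merely $\le 2n-2$.
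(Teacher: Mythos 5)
Your proposal is built on a misreading of the tool it relies on, and this derails the plan at the outset. \thmref{thm:upperbound} does not ask you to produce a subgroup $H\le\pi\times\pi$ containing ``enough of the diagonal''; it states, for $\pi$ torsion-free, that $\TC(\pi)\le\cld\bigl((\pi\times\pi)/\mathcal{Z}(\pi)\bigr)$, where the centre $\mathcal{Z}(\pi)$ is embedded \emph{diagonally} and one passes to the \emph{quotient} group. (You appear to have conflated it with the lower bound of \thmref{thm:lowerbound}, which is the statement about choosing subgroups.) There is nothing to choose and no needle to thread: the group whose dimension must be bounded is completely determined. Consequently, your proposed mechanism for exploiting the hypotheses on $G$ --- a Sylow/transfer argument killing a top-degree cohomology class of some preimage subgroup --- is not what makes the argument work. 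The actual role of the arithmetic conditions is to guarantee that $B_n^G/\mathcal{Z}(B_n^G)$, and hence $(B_n^G\times B_n^G)/\mathcal{Z}(B_n^G)$, is \emph{torsion-free}; without this, the latter group has infinite cohomological dimension and the bound from \thmref{thm:upperbound} is vacuous. Torsion-freeness is where braid-specific input enters: by \lemref{lem:centre} the centre is generated by the full twist $\Delta^2$, and by Gonz\'alez-Meneses' classification any $\alpha\in B_n$ with $\alpha^m=\Delta^l$ is conjugate to a power of one of the rotations $\delta=\sigma_1\cdots\sigma_{n-1}$ or $\epsilon=\sigma_1(\sigma_1\cdots\sigma_{n-1})$. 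Since $\pi(\delta)$ is an $n$-cycle and $\pi(\epsilon)$ an $(n-1)$-cycle, a cycle-structure count shows that a power $\pi(\delta)^p$ (resp.\ $\pi(\epsilon)^q$) is conjugate into $\mathfrak{S}_{n-k}\times\mathfrak{S}_k$ if and only if $n/(p,n)$ divides $k$ (resp.\ $(n-1)/(q,n-1)$ divides $k$ or $k-1$); the coprimality hypotheses exclude exactly this (\propref{prop:torsionfree}).

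The step you dismiss as ``a secondary technical point'' also requires a specific idea, since no general dimension formula will see the drop from $2n-2$ to $2n-3$. The paper computes $\cld\bigl((B_n^G\times B_n^G)/\mathcal{Z}(B_n^G)\bigr)$ via duality groups: the homeomorphism $F(\C,n)\approx F(\C_2,n-2)\times\C_1\times\C$ splits $P_n\cong\tilde{P}_{n-2}\times\Z$ with $\mathcal{Z}(P_n)=\{1\}\times\Z$, whence $(P_n\times P_n)/\mathcal{Z}(P_n)\cong\tilde{P}_{n-2}\times\tilde{P}_{n-2}\times\Z$, a duality group of dimension $2n-3$ by \lemref{lem:dualitygroups} and \thmref{thm:duality}(2); this sits with finite index inside the torsion-free quotient, so \thmref{thm:duality}(1) gives $\cld = 2n-3$ exactly. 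Your instinct that the two fixed strands produce a product splitting is correct, but in the paper that splitting serves this dimension count and, in the case $G\le\mathfrak{S}_{n-2}\times\{1\}^2$, the torsion-freeness claim (via $B_n^G\cong\tilde{B}_{n-2}^H\times\Z$, so that $B_n^G/\mathcal{Z}(B_n^G)\cong\tilde{B}_{n-2}^H$ is torsion-free, having a finite-dimensional $K(\pi,1)$); it is not used to build a subgroup of $B_n^G\times B_n^G$, which is not what \thmref{thm:upperbound} requires.
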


\begin{thm}\label{thm:introlower}
Let $G\le \mathfrak{S}_{n-k}\times \mathfrak{S}_k$ for $k\ge2$. Then \[\TC(B_n^G)\ge 2n-k-1.\]

Furthermore, if $G\le \mathfrak{S}_{n-1}\times \{1\}$, then \[\TC(B_n^G)\ge 2n-3.\]
\end{thm}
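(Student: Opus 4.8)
The plan is to apply the lower bound of \thmref{thm:lowerbound} to a product subgroup $A\times B\le B_n^G\times B_n^G$. For such a product the hypothesis of that theorem --- trivial intersection with every conjugate of the diagonal --- unwinds to the single condition that $B\cap gAg^{-1}=\{1\}$ for all $g\in B_n^G$, and the conclusion becomes $\TC(B_n^G)\ge \cld(A)+\cld(B)$. I would take $A$ and $B$ to be free abelian subgroups lying inside the pure braid group $P_n\le B_n^G$, with $\cld(A)=\rk(A)=n-1$ in both cases, and with $\rk(B)=n-k$ for the first statement, respectively $\rk(B)=n-2$ for the second. Since $P_n\le B_n^G$ for \emph{every} $G$, choosing $A,B\le P_n$ costs nothing, and all of the content is concentrated in the transversality condition. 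Because $G\le \mathfrak{S}_{n-k}\times\mathfrak{S}_k$ (resp. $G\le \mathfrak{S}_{n-1}\times\{1\}$), it suffices to verify that condition with $g$ ranging over the \emph{largest} admissible group $B_n^{\mathfrak{S}_{n-k}\times\mathfrak{S}_k}$ (resp. $B_n^{\mathfrak{S}_{n-1}\times\{1\}}$): enlarging $G$ only enlarges the set of conjugating elements, so the maximal case implies all others.

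For the subgroups I would take $A$ to be a maximal family of commuting nested full twists --- on $\{1,2\}$, on $\{1,2,3\}$, \dots, on $\{1,\dots,n\}$ --- which generate a copy of $\Z^{n-1}$, so that $\cld(A)=n-1$ in every case. The subgroup $B\le P_n$ must be free abelian of the prescribed rank and transverse to all conjugates of $A$. For the first statement I would build $B$ from windings respecting the partition into the two blocks: internal windings within the first block, together with windings encircling the wall separating the two blocks, retaining only those directions left invariant by the $\mathfrak{S}_k$-symmetry of the second block. It is precisely this symmetry that collapses the available directions to exactly $\rk(B)=n-k$. For the second statement a different construction is needed, exploiting the distinguished fixed strand: windings of nested sub-clouds of the symmetric strands around the marked strand furnish a transverse $\Z^{n-2}$, strictly larger than what the two-block recipe yields when a block is a singleton, and this extra rank is the source of the sharper bound $2n-3$.

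The decisive step, and the one I expect to be hardest, is establishing $B\cap gAg^{-1}=\{1\}$ for all $g$ in the maximal mixed braid group. No single quotient homomorphism can deliver this, since $\rk(A)+\rk(B)$ exceeds $\cld(B_n^G)=n-1$: any homomorphism killing the full-rank subgroup $A$ would necessarily have finite image and so could not be injective on the infinite group $B$. Instead the argument must proceed through the iterated free-group structure of $P_n$, namely the Fadell--Neuwirth tower $P_n\to P_{n-1}\to\cdots$ whose successive kernels are free. One pushes a hypothetical nontrivial $b\in B\cap gAg^{-1}$ down the tower and uses the fact that distinct basis elements of a free group have no conjugate powers in common --- the phenomenon already visible in $P_3=F_2\times\Z$ --- to force $b=1$. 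The block symmetry enters here: conjugation by braids permuting the second block moves the generators of $B$, and it is exactly the windings surviving this action that may be kept, which is what pins the rank at $n-k$. Once transversality is in hand, \thmref{thm:lowerbound} gives $\TC(B_n^G)\ge \cld(A)+\cld(B)$, equal to $(n-1)+(n-k)=2n-k-1$ in the first case and $(n-1)+(n-2)=2n-3$ in the second.
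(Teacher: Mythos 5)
Your overall framework is the same as the paper's: apply \thmref{thm:lowerbound} to two subgroups $A,B\le P_n\le B_n^G$ with $\cld(A)=n-1$ and $\cld(B)=n-k$ (resp.\ $n-2$), and reduce to the maximal group $B_n^{\mathfrak{S}_{n-k}\times\mathfrak{S}_k}$ since enlarging $G$ only enlarges the set of conjugators. That setup is correct. But the entire content of the theorem lies in the step you defer: exhibiting $B$ concretely and proving $B\cap gAg^{-1}=\{1\}$ for all $g$ in the mixed braid group. Your $B$ is never actually defined (``windings encircling the wall separating the two blocks, retaining only those directions left invariant by the $\mathfrak{S}_k$-symmetry'' is not a construction, and invariance under the block symmetry is not the condition \thmref{thm:lowerbound} asks for), and your proposed verification --- pushing a hypothetical intersection element down the Fadell--Neuwirth tower and invoking the fact that distinct basis elements of a free group share no conjugate powers --- is only a hope, not an argument. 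In particular it is unclear how that free-group fact controls conjugation by elements of $B_n^G\setminus P_n$, which permute strands and hence do not respect the tower's coordinates level by level. As it stands, the proposal is a correct reduction plus an unproved core claim, so it does not constitute a proof.

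Two remarks that would have unblocked you. First, \thmref{thm:lowerbound} only needs $\cld(A\times B)$, so $B$ need not be abelian: the paper takes $B=P_{n-k+1}$ (the pure braid group on the first $n-k+1$ strands, with $k-1$ non-interacting strands appended), which is a duality group with $\cld=n-k$, and $B=P_{n-1}$ in the second case; this sidesteps your search for a transverse free abelian subgroup of the right rank. Second, the transversality is verified not through the tower but through a conjugation-invariant: closing off a pure braid yields an \emph{ordered} link, and conjugating by $\gamma\in B_n^G$ before closing only permutes the components by $\pi(\gamma)\in G$, which preserves the two blocks. Taking $A=A_n\cong\Z^{n-1}$ generated by the braids $\alpha_j$ in which strand $j$ wraps around all later strands, one checks via linking numbers that a nontrivial element of $A_n$ closes to a link in which at least two of the last $k$ components are linked to something, whereas any element of $P_{n-k+1}$ closes to a link in which at most one of the last $k$ components is linked to anything; since $G$ cannot move components between blocks, the intersection of $P_{n-k+1}$ with any $B_n^G$-conjugate of $A_n$ is trivial. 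Some such computable invariant is exactly what your sketch is missing.
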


\begin{cor}\label{exact}
If $G\leq \mathfrak{S}_{n-2}\times \{1\}^2$, then
\[
\TC(B_n^G)=2n-3.
\]
\end{cor}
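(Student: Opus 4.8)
The plan is to observe that this corollary is an immediate consequence of the two main theorems: the hypothesis $G \leq \mathfrak{S}_{n-2}\times\{1\}^2$ simultaneously meets the upper-bound condition of Theorem~\ref{thm:introupper} and implies a subgroup containment to which Theorem~\ref{thm:introlower} applies, and the two bounds coincide at $2n-3$.

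For the upper bound I would invoke the second bullet of Theorem~\ref{thm:introupper} verbatim: its hypothesis is exactly $G\leq\mathfrak{S}_{n-2}\times\{1\}^2$, so that theorem yields $\TC(B_n^G)\leq 2n-3$ with no further work.

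For the lower bound the key elementary observation is the chain of inclusions $\mathfrak{S}_{n-2}\times\{1\}^2 \leq \mathfrak{S}_{n-1}\times\{1\}$, since a permutation fixing both of the last two strands in particular fixes the last strand. Hence $G\leq\mathfrak{S}_{n-1}\times\{1\}$, and the second assertion of Theorem~\ref{thm:introlower} gives $\TC(B_n^G)\geq 2n-3$. Equivalently, one may use $\mathfrak{S}_{n-2}\times\{1\}^2 \leq \mathfrak{S}_{n-2}\times\mathfrak{S}_2$ and apply the first assertion of Theorem~\ref{thm:introlower} with $k=2$, which returns $2n-k-1=2n-3$; it is reassuring that both routes deliver the same sharp lower bound.

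Combining the two estimates forces $\TC(B_n^G)=2n-3$. I do not expect any genuine obstacle at the level of the corollary itself: all of the substance resides in establishing Theorems~\ref{thm:introupper} and~\ref{thm:introlower} (via the cohomological-dimension bounds of the quoted Theorems~\ref{thm:lowerbound} and~\ref{thm:upperbound}), so the proof reduces to citing these results and recording the elementary subgroup containments above.
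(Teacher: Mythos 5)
Your proposal is correct and matches the paper's own (implicit) derivation: the paper also obtains this corollary by combining the second case of Theorem~\ref{thm:introupper} with the lower bound $\TC(B_n^G)\ge 2n-3$ from Theorem~\ref{thm:introlower} (recorded in the body of the paper as Corollary~\ref{cor:lower2}, which follows from either of the containments you note). Both of your routes to the lower bound are valid, and nothing further is needed at the level of the corollary.
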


These results extend the computation of the topological complexity of pure braid groups (due to Farber and Yuzvinsky \cite{FY}) to various \emph{mixed} or \emph{coloured braid groups}. Our methods are somewhat different (in particular we use \cite[Theorem 1.1]{GLO} instead of zero-divisors cup-length).

The above results generalise to the setting of higher topological complexity. The generalised results are formally very similar and thus we do not give them in the introduction. They are discussed in Section \ref{highertc}. We should mention that the results for higher topological complexity extend the computation of the higher topological complexity of pure braid groups due to Gonz\'alez and Grant \cite{GG}. Again, our methods are somewhat different to theirs, which are in turn similar to the ones used by Farber and Yuzvinsky in \cite{FY}.

Configuration spaces of points in the plane give models for Eilenberg--Mac Lane spaces of braid groups, including those considered in this paper (see \lemref{lem:keypione} below). Therefore, our results have implications for motion planning of $n$ agents moving in a planar region avoiding collisions, where the agents are partitioned into equivalence classes according to their function.

Several of the results in this paper were inspired by corresponding results in \cite{CGJ}; we thank the authors of that paper for their correspondence and in particular for providing an algebraic proof of \lemref{lem:centre}. We also thank Jes\'us Gonz\'alez for suggesting that we generalise our results to higher topological complexity.

\section{Topological complexity of aspherical spaces}

For any space $X$, let $p_X:X^I\to X\times X$ denote the free path fibration on $X$, with projection $p_X(\gamma) = (\gamma(0),\gamma(1))$. Recall that the \emph{topological complexity} of $X$, denoted $\TC(X)$, is defined to be the minimal $k$ such that $X\times X$ admits a cover by $k+1$ open sets $U_0,U_1,\ldots , U_k$, on each of which there exists a local section of $p_X$ (that is, a continuous map $s_i:U_i\to X^I$ such that $p_X\circ s_i = \mathrm{incl}_i:U_i\hookrightarrow X\times X$). Note that here we use the reduced version of $\TC(X)$, which is one less than the number of open sets in the cover.

Let $\pi$ be a discrete group. It is well-known that there exists a connected CW-complex $K(\pi,1)$ with
\[\pi_i(K(\pi,1))=\left\{\begin{array}{ll} \pi & (i=1) \\ 0 & (i\ge2). \end{array}\right.\]
Such a space is called an \emph{Eilenberg--Mac Lane space} for the group $\pi$. Furthermore, $K(\pi,1)$ is unique up to homotopy. This makes the following definition sensible.

\begin{defn}
The topological complexity of a discrete group $\pi$ is given by
\[\TC(\pi)\coloneqq\TC(K(\pi,1)).\]
\end{defn}

In the survey article \cite{Far06} Farber poses the problem of describing $\TC(\pi)$ solely in terms of algebraic properties of the group $\pi$. Very little is known about this problem in general. We remark that the corresponding question about $\cat(\pi):=\cat(K(\pi,1))$ has been completely answered: By work of Eilenberg--Ganea \cite{EG} and Stallings \cite{Sta} and Swan \cite{Swa}, we have $\cat(\pi)=\cld(\pi)$, where $\cld$ denotes the cohomological dimension. In cases where the exact value of $\TC(\pi)$ is known, it often agrees with the standard cohomological lower bound in terms of the zero-divisors cup-length. However, one can also give potentially sharper lower bounds for $\TC(\pi)$ which take into account the subgroup structure of $\pi$.

\begin{thm}[{Grant--Lupton--Oprea \cite[Theorem 1.1]{GLO}}]\label{thm:lowerbound}
Let $\pi$ be a discrete group, and let $A$ and $B$ be subgroups of $\pi$. Suppose that $gAg^{-1} \cap B = \{1\}$
for every $g \in \pi$. Then \[\TC(\pi)\ge \cld(A \times B).\]
\end{thm}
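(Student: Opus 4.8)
The plan is to realise $\TC(\pi)$ as a sectional category and then restrict attention to the subgroup $A\times B\le \pi\times\pi$. Write $X=K(\pi,1)$ and let $p\colon P\to X\times X$ be the fibration obtained by converting the diagonal $\Delta\colon X\to X\times X$ into a fibration, so that $\TC(\pi)=\secat(p)$. Since $X$ is aspherical the fibre of $p$ is homotopy equivalent to the discrete set $\pi$, and the monodromy action of $\pi_1(X\times X)=\pi\times\pi$ on $\pi_0$ of the fibre is the two-sided translation $(g,h)\cdot x = gxh^{-1}$. The inclusion $A\times B\hookrightarrow\pi\times\pi$ is induced by a map $f\colon K(A\times B,1)\to X\times X$; pulling $p$ back along $f$ gives a fibration $\bar p\colon \bar P\to K(A\times B,1)$. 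Because local sections pull back, sectional category does not increase under pullback, and hence $\TC(\pi)=\secat(p)\ge\secat(\bar p)$. It therefore suffices to prove $\secat(\bar p)\ge\cld(A\times B)$.

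Next I would use the hypothesis to analyse the monodromy of $\bar p$. Its fibre is again the discrete set $\pi$, now with $A\times B$ acting by $(a,b)\cdot x = axb^{-1}$. The stabiliser of $x\in\pi$ consists of those $(a,b)$ with $a=xbx^{-1}$, that is with $a\in A\cap xBx^{-1}$. Taking $g=x^{-1}$ in the hypothesis $gAg^{-1}\cap B=\{1\}$ and conjugating by $x$ gives $A\cap xBx^{-1}=\{1\}$, so every stabiliser is trivial and the $A\times B$-action on $\pi$ is free. Equivalently, writing $H=A\times B$, the local system $\Z[\pi]$ determined by $\bar p$ is a free $\Z[H]$-module, with one free generator for each orbit (double coset $AxB$).

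To extract the numerical bound I would appeal to the cohomological lower bound for sectional category: $\secat(\bar p)$ is at least the cup-length of classes lying in $\ker\big(\bar p^{\,*}\big)$, computed with local coefficients. The relevant class is the image of a canonical \emph{difference} class in $H^1(H;\overline{\Z[\pi]})$, where $\overline{\Z[\pi]}$ is the kernel of the augmentation $\Z[\pi]\to\Z$. Projecting onto a single free summand $\Z[H]$ identifies this class with the Berstein--Schwarz class $\mathfrak v_H\in H^1(H;I(H))$ of $H=A\times B$, whose $\cld(H)$-th power is non-zero. By naturality of the cup product the corresponding power of the original class is non-zero in $H^{\cld(H)}\!\big(H;\overline{\Z[\pi]}^{\otimes \cld(H)}\big)$, and this survives to $\ker(\bar p^{\,*})$ because each degree-one factor restricts trivially to the total space $\bar P$. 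This yields $\secat(\bar p)\ge\cld(H)=\cld(A\times B)$ and completes the argument.

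The main obstacle is the final step: converting freeness of the fibre action into the exact bound $\cld(A\times B)$. The geometric reductions (asphericity, pullback, the stabiliser computation) are routine, but one must set up the correct local coefficient system $\overline{\Z[\pi]}$, identify the primary obstruction class for $\bar p$, and verify that the free-module structure forces its $\cld(A\times B)$-th cup power to be non-zero while remaining a zero-divisor for $\bar p^{\,*}$. This is precisely where the Berstein--Schwarz theory (which computes $\cat(K(H,1))=\cld(H)$ through the nilpotency of $\mathfrak v_H$) must be adapted from the Lusternik--Schnirelmann setting to the sectional-category setting.
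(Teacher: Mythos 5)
You should first note a point of reference: the paper never proves this statement itself --- it is quoted from Grant--Lupton--Oprea \cite[Theorem 1.1]{GLO} --- so your proposal has to be measured against the proof in that paper, which your outline in fact shadows closely: realise $\TC(\pi)$ as $\secat$ of a fibrational substitute of the diagonal, pull back along $K(A\times B,1)\to K(\pi,1)\times K(\pi,1)$, use the hypothesis to show that the action $(a,b)\cdot x=axb^{-1}$ of $H:=A\times B$ on $\pi$ is free (so that $\Z[\pi]$ is a free $\Z[H]$-module with one generator per double coset), and detect $\cld(H)$ by cup powers of the canonical class via Berstein--Schwarz theory. Your geometric reductions, the stabiliser computation, and the zero-divisor property of the canonical class are all correct. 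The problem is the one step you yourself flag as ``the main obstacle'': as sketched, it is not merely unfinished but would fail.

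Concretely, write $I(\pi)=\ker(\Z[\pi]\to\Z)$ (your $\overline{\Z[\pi]}$) and let $\bar v\in H^1(H;I(\pi))$ be the restricted canonical class, represented by the cocycle $(a,b)\mapsto ab^{-1}-1$. You propose to project onto the free summand $\Z[H]\cong\Z[AB]$ of $\Z[\pi]$ and to identify the result with $\mathfrak{v}_H\in H^1(H;I(H))$. But that projection lands in $H^1(H;\Z[H])$, and there the class is \emph{zero}: the projected cocycle $h\mapsto h\cdot e-e$ (with $e$ the basis element corresponding to $1\in AB$) is the coboundary of the $0$-cochain $e\in\Z[H]$; equivalently, $\mathfrak{v}_H$ is the connecting class of $0\to I(H)\to\Z[H]\to\Z\to 0$, so it dies upon inclusion into $\Z[H]$-coefficients. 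Naturality of cup products cannot resurrect a class once it has been killed --- it transports nonvanishing only along coefficient maps already known to be injective on cohomology. The correct mechanism, and the one used in \cite{GLO}, runs in the opposite direction: since $A\cap B=\{1\}$, the orbit of $1$ is a copy of $H$ and the restricted cocycle already takes values in the \emph{submodule} $I(H)\subset I(\pi)|_H$, i.e.\ $\bar v=\iota_*(\mathfrak{v}_H)$ for the inclusion $\iota\colon I(H)\hookrightarrow I(\pi)|_H$. Freeness of the action is then exploited through a splitting rather than a projection: the cokernel of $\iota$ is $\bigoplus_D \Z[D]$ over the non-trivial double cosets $D$, a free (hence projective) $\Z[H]$-module, so $\iota$ is a split injection of $H$-modules; consequently $I(H)^{\otimes n}$ is a direct summand of $I(\pi)^{\otimes n}|_H$ and $H^n\bigl(H;I(H)^{\otimes n}\bigr)\to H^n\bigl(H;I(\pi)^{\otimes n}\bigr)$ is split injective for every $n$. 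Now the Berstein--Schwarz theorem gives $\mathfrak{v}_H^{\,\cld(H)}\neq 0$, the split injectivity gives $\bar v^{\,\cld(H)}\neq 0$, and Schwarz's cup-power lower bound for sectional category yields $\TC(\pi)\geq\secat(\bar p)\geq\cld(H)=\cld(A\times B)$. With this substitution --- inclusion plus splitting in place of projection onto a free summand --- your outline closes and coincides with the proof in \cite{GLO}; without it, the argument has a genuine gap exactly where the hypothesis is supposed to do its work.
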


This result has been applied in \cite{GLO} to calculate the topological complexity of Higman's group. In Section \ref{Lower bounds} of the current paper we will use it to give lower bounds for the topological complexity of mixed braid groups.

In fact, the upper bounds used in this paper are also group-theoretic in nature.

\begin{thm}[{Grant \cite[Proposition 3.7]{G}}]\label{thm:upperbound}
Let $\pi$ be a torsion-free discrete group, with centre $\mathcal{Z}(\pi)\le \pi$. Identify $\mathcal{Z}(\pi)$ with its image under the diagonal homomorphism $d:\pi\to \pi\times \pi$. Then \[\TC(\pi)\le \cld\left(\frac{\pi\times\pi}{\mathcal{Z}(\pi)}\right).\]
\end{thm}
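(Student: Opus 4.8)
The plan is to compute $\TC(\pi)$ as the sectional category of the free path fibration $p_X\co X^I\to X\times X$, where $X=K(\pi,1)$, and then to exhibit $p_X$ as a \emph{pullback} of a fibration over $K(Q,1)$, where $Q=(\pi\times\pi)/\mathcal Z(\pi)$. Once this is done, the bound falls out of Schwarz's inequality $\secat\le\cat$ together with the Eilenberg--Ganea--Stallings--Swan equality $\cat(K(Q,1))=\cld(Q)$.

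First I would set up a bundle-theoretic description of $p_X$. Writing $G=\pi\times\pi$ and $H=d(\pi)\le G$, asphericity of $X$ identifies $p_X$ up to fibre-homotopy equivalence with the associated coset bundle $EG\times_G(G/H)\to BG=X\times X$: its fibre is $\Omega X\simeq (\pi\times\pi)/d(\pi)\cong\pi$ (homotopy discrete), and over the aspherical base $K(G,1)$ such a fibration is determined by the monodromy $G$-action on the discrete fibre, which in both cases is left translation of cosets. Equivalently, $p_X\simeq Bd$ and $\secat(p_X)=\secat(Bd)$, where $Bd\co K(\pi,1)\to K(\pi\times\pi,1)$ is induced by the diagonal.

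The key step is to factor this bundle through $K(Q,1)$. Since $\mathcal Z(\pi)$ is central in $\pi$, its diagonal image $N\coloneqq d(\mathcal Z(\pi))=\{(z,z):z\in\mathcal Z(\pi)\}$ is central, hence normal, in $G$, so $Q=G/N$ is a group and $q\co G\to Q$ induces a fibration $f=Bq\co X\times X\to K(Q,1)$. Crucially $N\le H$, and for a normal subgroup contained in $H$ the fibrewise $G$-action on $G/H$ is trivial on $N$: indeed $n\cdot gH=g(g^{-1}ng)H=gH$ because $g^{-1}ng\in N\le H$. Hence the $G$-action on the fibre factors through $Q$, and the coset bundle is inflated from a $Q$-bundle, giving $EG\times_G(G/H)\cong f^*\big(EQ\times_Q(G/H)\big)$. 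In other words $p_X$ is the pullback along $f$ of a fibration $p'\co E'\to K(Q,1)$.

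I would finish by stringing together standard sectional-category estimates. Sectional category is monotone under pullback (sections and open covers pull back), so $\secat(p_X)=\secat(f^*p')\le\secat(p')$; Schwarz's bound gives $\secat(p')\le\cat(K(Q,1))$; and $\cat(K(Q,1))=\cld(Q)$. Combining,
\[
\TC(\pi)=\secat(p_X)\le\secat(p')\le\cat(K(Q,1))=\cld(Q)=\cld\!\left(\frac{\pi\times\pi}{\mathcal Z(\pi)}\right),
\]
which is the desired inequality. The main obstacle I anticipate is making the factorization rigorous: precisely, verifying that the fibrewise $G$-action descends to $Q$ and that this geometrically realizes $p_X$ as a pullback from $K(Q,1)$, rather than merely matching fundamental groups. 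Here torsion-freeness of $\pi$ is used to guarantee a well-behaved aspherical model $X$, so that these homotopy-theoretic identifications are valid.
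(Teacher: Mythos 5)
First, a caveat about the comparison you asked for: the paper does not prove this statement at all --- it is quoted verbatim from Grant \cite[Proposition 3.7]{G} --- so there is no in-paper proof to measure your attempt against. Judged on its own merits, your argument is correct and essentially complete. Writing $X=K(\pi,1)$, $G=\pi\times\pi$, $H=d(\pi)$, $N=d(\mathcal{Z}(\pi))$ and $Q=G/N$: the identification of $p_X$ with the covering space $E=(\tilde X\times\tilde X)/H\to X\times X$ is sound, since both $X^I\simeq X$ and $E$ are aspherical with the same image subgroup $H$ in $\pi_1(X\times X)$; your verification that $N$ is central (hence normal) in $G$, is contained in $H$, and therefore that $G/H\cong Q/q(H)$ as $G$-sets, correctly exhibits this covering as the pullback $f^*p'$ of the covering $p'\colon K(\pi/\mathcal{Z}(\pi),1)\to K(Q,1)$ along $f=Bq$; and the concluding string $\secat(f^*p')\le\secat(p')\le\cat(K(Q,1))=\cld(Q)$ is standard (Schwarz's two inequalities plus Eilenberg--Ganea--Stallings--Swan, the latter quoted in Section 2 of the paper). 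The one step you rightly flag as needing rigour --- that a fibration over a CW base with homotopy-discrete fibre is classified up to fibre homotopy equivalence by its monodromy --- can either be handled by Dold's theorem (a fibrewise map that is a homotopy equivalence of total spaces is a fibre homotopy equivalence), or bypassed entirely: factor the diagonal as a homotopy equivalence $X\to E$ followed by the covering $E\to X\times X$, and use that homotopy sections of a fibration can be rectified to genuine sections, giving $\secat(p_X)=\secat(E\to X\times X)$ directly. For what it is worth, the original proof in \cite{G} reaches the bound through that paper's machinery of fibrations and symmetry, exploiting the principal fibration over $K(Q,1)$ arising from the central extension $1\to\mathcal{Z}(\pi)\to\pi\times\pi\to Q\to 1$; your covering-space route is more elementary, and both pivot on the same key point, namely that centrality of $\mathcal{Z}(\pi)$ makes $d(\mathcal{Z}(\pi))$ normal in $\pi\times\pi$.

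One small correction: your closing sentence mislocates the role of torsion-freeness. A CW model of $K(\pi,1)$ exists for every discrete group, and in fact nothing in your argument uses the hypothesis. Nor does it need to: if $\pi$ has a nontrivial torsion element $t$, then the image of $(t,1)$ in $Q$ is a nontrivial torsion element (it lies in $N$ only if $t=1$), so $\cld(Q)=\infty$ and the inequality is vacuous. The hypothesis thus serves only to exclude vacuous cases (and reflects the needs of the proof in \cite{G}), rather than being what makes your homotopy-theoretic identifications valid.
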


This result was applied in \cite{G} to give upper bounds for the topological complexity of finitely generated torsion-free nilpotent groups. In Section \ref{Upper bounds} below we will apply it to various subgroups of the full braid group.

\section{Braid groups and their subgroups}\label{Braidgroups}

\begin{defn}
A \emph{braid on $n$ strands} is an isotopy class of embeddings of the disjoint union of $n$ intervals in $\R^3$. The embeddings start at a set of $n$ distinct points in the plane $z=0$ and end at the corresponding set of points in the plane $z=1$, and are monotonically increasing in the direction of the $z$-axis. The isotopies are fixed on the boundary. Note that the start and end points of the individual strands are not required to correspond.

The \emph{full braid group} $B_n$ for $n\ge2$ is the group of braids with $n$ strands, where the group operation is given by concatenating braids.
\end{defn}

The group $B_n$ was first studied by Artin \cite{Art}, who showed (among other things) that it is generated by the braids $\sigma_i$ for $i=1,\ldots ,n-1$ which pass the $i$th strand over the $(i+1)$st strand. There is a canonical epimorphism
\[\pi\colon B_n\to\mathfrak{S}_n\]
which sends the generator $\sigma_i$ to the transposition $(i\;i+1)$ in the symmetric group $\mathfrak{S}_n$. Thus a braid $\gamma\in B_n$ gets sent to the associated permutation $\pi(\gamma)\in \mathfrak{S}_n$ of its endpoints.

The kernel $P_n\coloneqq\pi^{-1}(\{1\})$ of this projection is the \emph{pure braid group} and its elements are called \emph{pure braids}. More generally, given a subgroup $G\leq \mathfrak{S}_n$, we denote its pre-image by $B_n^G\coloneqq\pi^{-1}(G)$. Such a subgroup of $B_n$ may be called a \emph{$G$-braid group}, and its elements \emph{$G$-braids}. These are the groups whose topological complexity we are interested in computing.

\begin{ex}\label{ex:mixed}
Given an integer $1\le k\le n-1$, let $G=\mathfrak{S}_{n-k}\times \mathfrak{S}_k\leq \mathfrak{S}_n$ be the group of permutations which preserve the partition of $n$ objects into the first $n-k$ objects and the last $k$ objects. We denote the subgroup $B_n^{\mathfrak{S}_{n-k}\times \mathfrak{S}_k}$ by $B_{n-k,k}$, and refer to it as a \emph{mixed braid group}. One could imagine using two colours to distinguish the first $n-k$ braids from the last $k$ braids (hence groups of this form also are often called \emph{coloured braid groups}).
\end{ex}

In proving algebraic facts about braid groups and their subgroups, it is often easiest to argue topologically using configuration space models for their $K(\pi,1)$'s. Let
$$\C_m:=\C\setminus\{0,1,\ldots , m-1\}$$
denote the complex plane with $m$ punctures. Recall that the \emph{configuration space} of $n$ points on the plane with $m$ punctures is given by
\[F(\C_m,n)\coloneqq\{(x_1,\ldots,x_n)\in(\C_m)^n\,|\,x_i\ne x_j\text{ for }i\ne j\}.\]

\begin{lem}[Fadell--Neuwirth \cite{FN}]\label{lem:fadellneuwirth}
For each $m\ge0$ and $n\ge2$, projection onto the first coordinate gives a locally trivial fibration sequence
\begin{equation}\label{Fadell-Neuwirth}
F(\C_{m+1},n-1)\to F(\C_m,n)\to\C_m.
\end{equation}
Furthermore, this fibration admits a section.
\end{lem}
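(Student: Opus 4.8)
The plan is to establish the three assertions separately—identifying the fibre, proving local triviality of the first-coordinate projection $p\colon F(\C_m,n)\to\C_m$, and exhibiting a section—with local triviality being the only substantial point. For the fibre, fix a basepoint $c\in\C_m$. By definition $p^{-1}(c)$ consists of all tuples $(c,x_2,\ldots,x_n)$ with pairwise distinct entries in $\C_m$; equivalently $(x_2,\ldots,x_n)$ is an ordered configuration of $n-1$ distinct points in $\C_m\setminus\{c\}=\C\setminus\{0,1,\ldots,m-1,c\}$. Any self-homeomorphism of $\C$ carrying the $(m+1)$-point set $\{0,1,\ldots,m-1,c\}$ onto $\{0,1,\ldots,m\}$ induces a homeomorphism $F(\C_m\setminus\{c\},n-1)\cong F(\C_{m+1},n-1)$, identifying the fibre as claimed.

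The hard part is local triviality. For each $y_0\in\C_m$ I would choose an open disk $U$ about $y_0$ with compact closure contained in $\C_m$, together with a family of homeomorphisms $h_y\colon\C_m\to\C_m$ depending continuously on $y\in U$, satisfying $h_{y_0}=\mathrm{id}$, $h_y(y_0)=y$, and $h_y$ equal to the identity outside a fixed compact subset of $\C_m$ (so in particular each $h_y$ fixes every puncture). Such a family is produced by integrating a compactly supported smooth vector field on $\C_m$ that drags a neighbourhood of $y_0$, or equivalently by applying the isotopy extension theorem to the inclusion of the point $y_0$. The assignment
\[
(y,(x_2,\ldots,x_n))\mapsto(y,h_y(x_2),\ldots,h_y(x_n))
\]
then defines a homeomorphism from $U\times p^{-1}(y_0)$ onto $p^{-1}(U)$ commuting with projection to $U$, with continuous inverse $(y,(z_2,\ldots,z_n))\mapsto(y,(h_y^{-1}(z_2),\ldots,h_y^{-1}(z_n)))$; this is the standard argument that forgetting coordinates yields a fibration of configuration spaces over a manifold.

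Finally, for the section I would use the non-compactness of $\C_m$ to push the remaining points out towards infinity. Define the continuous function $R\colon\C_m\to\R$ by $R(x_1)=\max(|x_1|,m)+1$ and set
\[
s(x_1)=\bigl(x_1,\ R(x_1)e^{2\pi i/(n-1)},\ R(x_1)e^{4\pi i/(n-1)},\ \ldots,\ R(x_1)e^{2\pi i(n-1)/(n-1)}\bigr).
\]
The last $n-1$ entries are $R(x_1)$ times the $(n-1)$st roots of unity, hence pairwise distinct; since $R(x_1)>|x_1|$ they differ from $x_1$, and since $R(x_1)>m-1\ge|j|$ for every puncture $j\in\{0,\ldots,m-1\}$ they avoid all punctures. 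Thus $s$ lands in $F(\C_m,n)$ and satisfies $p\circ s=\mathrm{id}$. The sole genuine obstacle is the local triviality step, which rests on constructing the continuous family of compactly supported homeomorphisms $h_y$; the fibre identification and the section are then immediate.
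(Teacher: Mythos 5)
Your proposal is correct. Note that the paper itself offers no proof of this lemma at all: it is quoted as a classical result of Fadell and Neuwirth with a citation to \cite{FN}, so there is nothing internal to compare against. Your argument is essentially the original one. The fibre identification (a self-homeomorphism of $\C$ taking $\{0,1,\ldots,m-1,c\}$ to $\{0,1,\ldots,m\}$), the local trivialisation by a continuous family of compactly supported homeomorphisms $h_y$ dragging $y_0$ to $y$ (which, being supported in a compact subset of $\C_m$, extend by the identity and hence preserve the punctures), and the verification that $(y,(x_2,\ldots,x_n))\mapsto(y,h_y(x_2),\ldots,h_y(x_n))$ is a fibre-preserving homeomorphism with the evident inverse, all go through exactly as you say; injectivity of $h_y$ is what guarantees the image points stay distinct and avoid $y$. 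Your section, placing the remaining $n-1$ points on a circle of radius $R(x_1)=\max(|x_1|,m)+1$ at the $(n-1)$st roots of unity, is a clean explicit version of the ``push points towards infinity'' construction and the three required disjointness checks (from each other, from $x_1$, from the punctures) are all verified correctly, including the degenerate case $m=0$. The only point deserving slightly more care is the joint continuity of $(y,x)\mapsto h_y(x)$ and of the inverse family, but the explicit bump-function construction $h_y(x)=x+\phi(x)(y-y_0)$ with $U$ small enough delivers this, so the gap is one of detail rather than substance.
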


\begin{lem}\label{lem:keypione}
Any subgroup $G\leq \mathfrak{S}_n$ of the symmetric group acts freely on $F(\C,n)$ by permuting the coordinates. The quotient space $F(\C,n)/G$ under this action is an Eilenberg--Mac Lane space $K(B_n^G,1)$.
\end{lem}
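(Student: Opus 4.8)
The plan is to realise $F(\C,n)/G$ as an intermediate covering space sitting between the ordered configuration space $F(\C,n)$ and the unordered configuration space $F(\C,n)/\mathfrak{S}_n$, and then to read off its fundamental group and its higher homotopy from those of $F(\C,n)$.

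First I would check freeness of the coordinate-permutation action. If $\sigma\in\mathfrak{S}_n$ fixes a point $(x_1,\ldots,x_n)\in F(\C,n)$, then $x_{\sigma^{-1}(i)}=x_i$ for every $i$; since the coordinates of a configuration are pairwise distinct this forces $\sigma=1$, so the restricted $G$-action is free as well. Because $G\le\mathfrak{S}_n$ is finite and $F(\C,n)$ is Hausdorff, a free action is automatically properly discontinuous (around any point one separates the finitely many points of its orbit by disjoint open sets and intersects to obtain a slice), so the orbit map $q\colon F(\C,n)\to F(\C,n)/G$ is a regular covering with deck group $G$. Its associated short exact sequence of fundamental groups reads
\[
1\to \pi_1(F(\C,n))\to \pi_1(F(\C,n)/G)\to G\to 1.
\]

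Next I would recall that $F(\C,n)$ is aspherical with $\pi_1(F(\C,n))\cong P_n$. Asphericity follows by induction from the Fadell--Neuwirth fibrations of \lemref{lem:fadellneuwirth}: starting from $m=0$ and iterating, $F(\C,n)$ is built as a tower of fibrations whose fibres are punctured planes $\C_m$, each homotopy equivalent to a wedge of circles and hence aspherical, so the long exact homotopy sequence kills all higher homotopy of the total space. Since $q$ is a covering map it is in particular a local homeomorphism, whence $\pi_i(F(\C,n)/G)\cong\pi_i(F(\C,n))=0$ for $i\ge2$, and $F(\C,n)/G$ is therefore an Eilenberg--Mac Lane space for its fundamental group.

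It remains to identify $\pi_1(F(\C,n)/G)$ with $B_n^G$ compatibly with the two defining extensions, and this is the step I expect to be the main obstacle. The key input is the classical topological model of Artin's braid group: the unordered configuration space $F(\C,n)/\mathfrak{S}_n$ is a $K(B_n,1)$, and under this identification the covering $F(\C,n)\to F(\C,n)/\mathfrak{S}_n$ corresponds to the normal subgroup $P_n\trianglelefteq B_n$, its deck group being exactly $\mathfrak{S}_n$ acting by permuting coordinates, with the induced quotient map $B_n\to\mathfrak{S}_n$ equal to the permutation homomorphism $\pi$. Granting this, $q$ factors as $F(\C,n)\to F(\C,n)/G\to F(\C,n)/\mathfrak{S}_n$, exhibiting $F(\C,n)/G$ as the intermediate cover associated, under the Galois correspondence, to the subgroup $G\le\mathfrak{S}_n=B_n/P_n$; equivalently it is the cover of $F(\C,n)/\mathfrak{S}_n$ corresponding to $\pi^{-1}(G)=B_n^G\le B_n$. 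Hence $\pi_1(F(\C,n)/G)\cong B_n^G$, and the isomorphism matches the extension displayed above with the restriction of $1\to P_n\to B_n\to\mathfrak{S}_n\to1$ to $G$, which would complete the proof. The care required here lies precisely in verifying that the deck-transformation action of $\mathfrak{S}_n$ on $F(\C,n)$ is the coordinate-permutation action (immediate once $F(\C,n)/\mathfrak{S}_n$ is taken to be the orbit space) and in correctly invoking the classical identification of $B_n$ with $\pi_1(F(\C,n)/\mathfrak{S}_n)$.
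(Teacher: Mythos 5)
Your proposal is correct and takes essentially the same approach as the paper: both identify $\pi_1(F(\C,n)/G)$ with $B_n^G$ and then deduce asphericity of the quotient from asphericity of $F(\C,n)$, proved by induction along the Fadell--Neuwirth fibrations. The differences are ones of detail only --- the paper treats the $\pi_1$ identification as immediate from the geometric definition of a braid, whereas you derive it from the classical identification $\pi_1(F(\C,n)/\mathfrak{S}_n)\cong B_n$ via the Galois correspondence of covering spaces (and you also verify freeness and proper discontinuity explicitly); note only that in your tower the punctured planes appear as fibres (the forgetful version of Fadell--Neuwirth), whereas in the paper's \lemref{lem:fadellneuwirth} they are the bases, though either version runs the induction.
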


\begin{proof}
It is easily seen from the definitions that $B_n^G$ is the fundamental group of $F(\C,n)/G$.

It is also well known that $F(\C,n)$ is aspherical. This follows from the Fadell-Neuwirth fibrations (\ref{Fadell-Neuwirth}) and induction. Explicitly, note that $F(\C_{n-1},1) = \C_{n-1}$ is homotopic to a wedge of circles and therefore aspherical, and that for $1\le k\le n-1$ we have that $F(\C_{n-k-1},k+1)$ fibres over an aspherical space $\C_{n-k-1}$ with fibre $F(\C_{n-k},k)$.

The spaces $F(\C,n)/G$ are therefore aspherical, because they have $F(\C,n)$ as a covering space.
\end{proof}

Recall that a duality group is a group whose (co)homology satisfies a generalization of Poincar\'e duality \cite{BE}. In more detail, we call a group $\pi$ a \emph{duality group of dimension $n$} if there exists a $\Z\pi$-module $C$ and an element $e\in H_n(\pi;C)$ such that the cap product homomorphism
\[e\cap-\colon H^k(\pi;A)\overset{\cong}{\to}H_{n-k}(\pi;C\otimes A)\]
is an isomorphism for all $k\in \Z$ and all $\Z\pi$-modules $A$. It follows that $\cld(\pi)=n$.

For instance, the fundamental group of a closed aspherical $n$-manifold is a duality group of dimension $n$.


\begin{thm}[Bieri--Eckmann \cite{BE}]\label{thm:duality}
\begin{enumerate}
\item If a torsion-free group $\pi$ has a finite index subgroup $S$ which is a duality group of dimension $n$, then $\pi$ is also a duality group of dimension $n$.
\item If $K$ and $Q$ are duality groups of dimensions $m$ and $n$ respectively which fit into an extension of groups
\[
\xymatrix{
1 \ar[r] & K \ar[r] & \pi \ar[r] & Q\ar[r] & 1,
}
\]
then $\pi$ is a duality group of dimension $m+n$.
\item Non-trivial free groups are duality groups of dimension $1$.
\end{enumerate}
\end{thm}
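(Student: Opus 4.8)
The plan is to replace the cap-product definition by the more tractable cohomological characterisation of duality groups, also due to Bieri--Eckmann: a group $\pi$ is a duality group of dimension $n$, with dualising module $C$, if and only if $\pi$ is of type $FP$, the cohomology $H^i(\pi;\Z\pi)$ vanishes for $i\ne n$, and $C\cong H^n(\pi;\Z\pi)$ is torsion-free as an abelian group. One direction is easy: taking $A=\Z\pi$ in the cap-product isomorphism and using the untwisting isomorphism $C\otimes_\Z\Z\pi\cong C^{\mathrm{triv}}\otimes_\Z\Z\pi$ (a free $\Z\pi$-module) forces $H^i(\pi;\Z\pi)=0$ for $i\ne n$ and identifies $C$ with $H^n(\pi;\Z\pi)$, since $H_j(\pi;-)$ of a free module vanishes for $j>0$ and is the module of coinvariants for $j=0$. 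Establishing that a cap-product duality group is necessarily $FP$, together with the converse implication (which runs through a universal-coefficient comparison of $e\cap-$ with a morphism of $\delta$-functors), is the genuinely homological part of the argument and is the main obstacle; once it is in place, all three statements follow by standard manipulations of $H^*(-;\Z-)$. Throughout I use two facts about $FP$ groups: their cohomology commutes with arbitrary direct sums of coefficient modules, and $\cld(\pi)=\sup\{i:H^i(\pi;\Z\pi)\ne0\}$.

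For the base cases (3) and (1) the criterion makes short work. A non-trivial finitely generated free group $F$ has the length-one free resolution $0\to\bigoplus_S\Z F\to\Z F\to\Z\to0$ coming from its Cayley graph, so $F$ is $FP$ and $\cld(F)=1$; applying $\Hom_{\Z F}(-,\Z F)$ gives $H^0(F;\Z F)=0$ (as $F$ is infinite) and exhibits $H^1(F;\Z F)$ as a cokernel of a map of free abelian groups, hence torsion-free, so $F$ is a duality group of dimension $1$. For (1), if $S\le\pi$ has finite index, is a duality group of dimension $n$, and $\pi$ is torsion-free, then finiteness of the index gives $\Z\pi\cong\mathrm{Coind}_S^\pi\,\Z S$ as $\Z\pi$-modules, so Shapiro's lemma yields $H^*(\pi;\Z\pi)\cong H^*(S;\Z S)$, which is concentrated in degree $n$ and torsion-free there. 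Since type $FP$ passes between a group and its finite-index subgroups, and since torsion-freeness guarantees $\cld(\pi)<\infty$ by Serre's theorem, the criterion gives that $\pi$ is a duality group of dimension $n$.

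For (2), let $1\to K\to\pi\to Q\to1$ with $K$, $Q$ duality groups of dimensions $m$, $n$ and dualising modules $C_K$, $C_Q$. Extensions of $FP$ groups are $FP$, so $\pi$ is $FP$, and I would analyse the Lyndon--Hochschild--Serre spectral sequence $H^p(Q;H^q(K;\Z\pi))\Rightarrow H^{p+q}(\pi;\Z\pi)$. Restricted to $K$ the module $\Z\pi$ is free, $\Z\pi\cong\bigoplus_Q\Z K$, and since $K$ is $FP$ its cohomology commutes with this sum; thus $H^q(K;\Z\pi)$ vanishes for $q\ne m$, and as a $\Z Q$-module $H^m(K;\Z\pi)$ is isomorphic, after untwisting, to the free module $C_K\otimes_\Z\Z Q$. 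The spectral sequence therefore collapses onto the row $q=m$, giving $H^{p+m}(\pi;\Z\pi)\cong H^p\!\left(Q;C_K\otimes_\Z\Z Q\right)$. Because $Q$ is $FP$ and $C_K$ is torsion-free, hence flat over $\Z$, one gets $H^p(Q;C_K\otimes_\Z\Z Q)\cong H^p(Q;\Z Q)\otimes_\Z C_K$, which vanishes for $p\ne n$ and equals $C_Q\otimes_\Z C_K$ for $p=n$. Hence $H^i(\pi;\Z\pi)$ is concentrated in degree $m+n$ and equals $C_Q\otimes_\Z C_K$, a tensor product of torsion-free abelian groups and so torsion-free; by the criterion $\pi$ is a duality group of dimension $m+n$. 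The delicate points here are the precise identification of the $\Z Q$-module structure on $H^m(K;\Z\pi)$ and the careful use of flatness of $C_K$ to evaluate the surviving edge term.
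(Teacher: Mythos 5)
The paper offers no proof of this theorem at all: it is quoted from Bieri--Eckmann \cite{BE} and used as a black box. So the comparison is really with the standard proofs in the literature, which your outline essentially reproduces: reduce to the cohomological characterisation (type $FP$, $H^i(\pi;\Z\pi)=0$ for $i\ne n$, $H^n(\pi;\Z\pi)$ torsion-free), prove (1) by Shapiro's lemma for the coinduced module $\Z\pi\cong\mathrm{Coind}_S^\pi\,\Z S$ together with Serre's theorem to get $\cld(\pi)<\infty$ (which is genuinely needed, since type $FP$ does not pass up a finite-index inclusion without it), and prove (2) by the Lyndon--Hochschild--Serre spectral sequence with $\Z\pi$ coefficients, using that $\Z\pi$ restricts to a free $\Z K$-module and that $FP$ groups have cohomology commuting with direct sums. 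This is exactly the Bieri--Eckmann argument, and your handling of the two delicate points in (2) --- the $\Z Q$-module structure on $H^m(K;\Z\pi)$ and the flatness of $C_K$ needed to evaluate $H^p(Q;C_K\otimes_\Z\Z Q)$ --- is the right one.

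Two genuine issues remain. First, in (3) you assert that $H^1(F;\Z F)$ is torsion-free because it is ``a cokernel of a map of free abelian groups''; that principle is false ($\Z/2$ is the cokernel of multiplication by $2$ on $\Z$). The conclusion is true but needs an argument specific to the situation: for instance, from the coefficient sequence $0\to\Z F\xrightarrow{\cdot r}\Z F\to(\Z/r)[F]\to 0$ one sees that the $r$-torsion of $H^1(F;\Z F)$ is a quotient of $H^0(F;(\Z/r)[F])=\left((\Z/r)[F]\right)^F=0$, since $F$ is infinite; hence $H^1(F;\Z F)$ has no torsion. Second, your entire argument rests on the equivalence of the paper's cap-product definition with the $FP$-characterisation, which you explicitly leave unproved and correctly identify as ``the main obstacle''; this equivalence is itself one of the principal theorems of \cite{BE}, so citing it is legitimate (the paper cites \cite{BE} for the whole statement anyway), but it is not something that follows by routine manipulations, and as written your proposal is not self-contained. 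A minor point: your proof of (3) covers only finitely generated free groups. This is in fact the correct scope --- duality groups are of type $FP$ and hence finitely generated, so the paper's statement should be read with that restriction --- and it suffices for the paper's applications, where the free groups arising are fundamental groups of finitely punctured planes.
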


The following lemma tells us that the groups we are interested in are indeed duality groups.

\begin{lem}\label{lem:dualitygroups}
For every $G\le\mathfrak{S}_n$, the group $B_n^G$ is a duality group of dimension $n-1$.

Furthermore, $\tilde P_{n-2}\coloneqq\pi_1(F(\C_2,n-2))$ is a duality group of dimension $n-2$.
\end{lem}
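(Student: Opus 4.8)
The plan is to build up all the relevant configuration spaces by iterating the Fadell--Neuwirth fibrations of \lemref{lem:fadellneuwirth}, applying the extension property of duality groups (\thmref{thm:duality}(2)) at each stage, and then transferring the conclusion from the pure braid group to $B_n^G$ via the finite-index property (\thmref{thm:duality}(1)).

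First I would isolate the following inductive claim: for every $m\ge1$ and every $n\ge1$, the group $\pi_1(F(\C_m,n))$ is a duality group of dimension $n$. The base case $n=1$ is immediate, since $F(\C_m,1)=\C_m$ is a plane with $m\ge1$ punctures and hence homotopy equivalent to a wedge of $m\ge1$ circles; its fundamental group is a nontrivial free group, which is a duality group of dimension $1$ by \thmref{thm:duality}(3). For the inductive step I would feed the Fadell--Neuwirth fibration
\[
F(\C_{m+1},n-1)\to F(\C_m,n)\to \C_m
\]
into the long exact sequence of homotopy groups. Since the fibration admits a section, this yields a split short exact sequence
\[
1\to \pi_1(F(\C_{m+1},n-1))\to \pi_1(F(\C_m,n))\to \pi_1(\C_m)\to 1.
\]
As $m\ge1$, the base group $\pi_1(\C_m)$ is a nontrivial free group, hence a duality group of dimension $1$; and as $m+1\ge1$, the inductive hypothesis makes the fibre group a duality group of dimension $n-1$. \thmref{thm:duality}(2) then identifies the middle group as a duality group of dimension $(n-1)+1=n$, completing the induction.

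With the claim in hand the lemma follows quickly. The ``furthermore'' assertion is simply the case $m=2$ of the claim, with $n-2$ in place of $n$: it gives that $\tilde P_{n-2}=\pi_1(F(\C_2,n-2))$ is a duality group of dimension $n-2$. For the pure braid group I would observe that the Fadell--Neuwirth fibration with $m=0$ has contractible base $\C_0=\C$, so that $F(\C,n)\simeq F(\C_1,n-1)$ and hence $P_n\cong\pi_1(F(\C_1,n-1))$, which by the claim (taking $m=1$) is a duality group of dimension $n-1$. Finally, $P_n$ sits inside $B_n^G$ as a subgroup of finite index $|G|$, and $B_n^G$ is torsion-free since it is the fundamental group of the finite-dimensional aspherical complex $F(\C,n)/G$ of \lemref{lem:keypione}. \thmref{thm:duality}(1) then upgrades the conclusion from $P_n$ to $B_n^G$, yielding a duality group of dimension $n-1$.

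The argument is mostly bookkeeping once the inductive claim is formulated, and I expect the only genuinely delicate points to be organizational. The key is to keep the puncture count $m\ge1$ throughout, so that each base space $\C_m$ contributes a nontrivial free fundamental group (duality dimension $1$ rather than $0$); this forces the separate treatment of the passage from $F(\C,n)$, which has no punctures, to $F(\C_1,n-1)$ before the induction can be invoked. The remaining subtlety is confirming torsion-freeness of $B_n^G$, which is needed to apply \thmref{thm:duality}(1), but this is immediate from the aspherical finite-dimensional model supplied by \lemref{lem:keypione}.
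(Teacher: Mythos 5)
Your proposal is correct and follows essentially the same route as the paper: induction through the Fadell--Neuwirth fibrations using the Bieri--Eckmann extension and base-case results (\thmref{thm:duality}(2),(3)), followed by the finite-index transfer (\thmref{thm:duality}(1)) from $P_n$ to the torsion-free group $B_n^G$. If anything, your reorganisation (inducting on the number of points with the puncture count $m\ge1$ held fixed, and treating the final fibration over the contractible base $\C$ separately) is slightly more careful than the paper's single chain, which glosses over that last step.
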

\begin{proof}
The fact that $P_n=\pi_1 (F(\C,n))$ is a duality group of dimension $n-1$ follows from the same chain of Fadell--Neuwirth fibrations used in the proof of \lemref{lem:keypione}. Indeed, by Lemma \ref{lem:fadellneuwirth} we have for each $1\le k\le n-1$ a split short exact sequence of fundamental groups
 \[\xymatrix{
 1  \ar[r] & \pi_1 (F(\C_{n-k},k))  \ar[r] & \pi_1 (F(\C_{n-k-1},k+1))  \ar[r] & \pi_1 (\C_{n-k-1})  \ar[r] &  1.}
 \]
  Our inductive hypothesis is that $\pi_1 (F(\C_{n-k},k))$ is a duality group of dimension $k$. The induction step follows from Theorem \ref{thm:duality} (2), with the base case $k=1$ given by Theorem \ref{thm:duality} (3).

Observe that in the course of the induction (two steps before the end) we have shown that $\tilde P_{n-2}\coloneqq\pi_1(F(\C_2,n-2))$ is a duality group of dimension $n-2$.

Finally, let $G\le\mathfrak{S}_n$. The group $B_n^G$ is torsion-free because it has a finite-dimensional $K(\pi,1)$. Now the claim follows from \thmref{thm:duality} (1) because $B_n^G$ contains the duality group $P_n$ as a finite index subgroup.
\end{proof}

In order to apply Theorem \ref{thm:upperbound} to the groups $B_n^G$, we need to identify their centres. Note that $P_n\leq B_n^G\leq B_n$. It is well known that $\mathcal{Z}(P_n)$ and $\mathcal{Z}(B_n)$ are infinite cyclic, and are equal for $n\geq 3$.

\begin{lem}\label{lem:centre}
For any subgroup $G\leq \mathfrak{S}_n$, the centre $\mathcal{Z}(B_n^G)$ is infinite cyclic. Furthermore, when $n\ge3$ we have
\[
\mathcal{Z}(P_n)=\mathcal{Z}(B_n^G)=\mathcal{Z}(B_n),
\]
and all of these groups are generated by the full twist
\[\Delta^2=((\sigma_1\ldots\sigma_{n-1})(\sigma_1\ldots\sigma_{n-2})\ldots(\sigma_1\sigma_2)\sigma_1)^2=(\sigma_1\ldots\sigma_{n-1})^n.\]
\end{lem}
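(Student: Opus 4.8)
The plan is to sandwich $\mathcal{Z}(B_n^G)$ between $\langle\Delta^2\rangle$ and the centraliser $C_{B_n}(P_n)$ of the pure braid group, and then to identify this centraliser with $\langle\Delta^2\rangle$ by a homological argument. The case $n=2$ I would dispose of immediately: then $B_2^G$ is either $P_2$ or $B_2$, both of which are infinite cyclic, so the centre is infinite cyclic and there is nothing more to prove. Assume henceforth $n\ge3$. Here I may use, as stated, that $\mathcal{Z}(P_n)=\mathcal{Z}(B_n)=\langle\Delta^2\rangle$. The inclusion $\langle\Delta^2\rangle\subseteq\mathcal{Z}(B_n^G)$ is easy: since $\Delta^2$ generates $\mathcal{Z}(B_n)$ it commutes with every element of $B_n$, and a fortiori with every element of the subgroup $B_n^G$, while at the same time $\Delta^2\in P_n\subseteq B_n^G$; hence $\Delta^2\in\mathcal{Z}(B_n^G)$. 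This reduces the lemma to the reverse inclusion $\mathcal{Z}(B_n^G)\subseteq\langle\Delta^2\rangle$.

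For the reverse inclusion, let $z\in\mathcal{Z}(B_n^G)$. Since $P_n\subseteq B_n^G$, the element $z$ centralises $P_n$. I claim this forces $\pi(z)=1$, i.e. $z\in P_n$; granting this, $z$ is a pure braid centralising $P_n$, so $z\in\mathcal{Z}(P_n)=\langle\Delta^2\rangle$, as required. The heart of the proof is therefore the assertion that no braid inducing a non-trivial permutation can centralise $P_n$, so that $C_{B_n}(P_n)\subseteq P_n$ and hence $C_{B_n}(P_n)=\mathcal{Z}(P_n)=\langle\Delta^2\rangle$.

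To prove this I would pass to the abelianisation $H_1(P_n)\cong\Z^{\binom{n}{2}}$, which is free abelian on the classes of the standard generators $A_{ij}$ for $1\le i<j\le n$. Conjugation in $B_n$ descends to an action of $B_n$ on $H_1(P_n)$; since inner automorphisms of $P_n$ act trivially on homology, this action factors through $B_n/P_n=\mathfrak{S}_n$, and a check on the generators $\sigma_k$ shows that $\sigma\in\mathfrak{S}_n$ sends $[A_{ij}]$ to $[A_{\sigma(i)\sigma(j)}]$. In other words, $\mathfrak{S}_n$ permutes the basis of $H_1(P_n)$ according to its natural action on the $2$-element subsets of $\{1,\dots,n\}$. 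If $z$ centralises $P_n$ then conjugation by $z$ is the identity on $P_n$, hence on $H_1(P_n)$, so $\sigma=\pi(z)$ fixes every subset $\{i,j\}$ setwise. For $n\ge3$ this action is faithful: if $\sigma(i)=k\ne i$, then choosing $j\notin\{i,k\}$ (possible as $n\ge3$) gives $\{\sigma(i),\sigma(j)\}=\{k,\sigma(j)\}$ with $k\notin\{i,j\}$, so $\{i,j\}$ is not preserved. Hence $\sigma=1$, $z\in P_n$, and the argument is complete.

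The main obstacle is the homological step: one must correctly identify the conjugation action on $H_1(P_n)$ as the permutation action on $2$-subsets. This rests on the standard description of $P_n^{\mathrm{ab}}$ together with the $B_n$-conjugation formulas for the $A_{ij}$, whose off-diagonal correction terms are themselves products of conjugates of generators and therefore vanish in $H_1(P_n)$; I would either cite these relations or verify the action of each $\sigma_k$ directly. Everything else in the argument is formal.
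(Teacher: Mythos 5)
Your proof is correct, and its skeleton matches the paper's: both arguments reduce the lemma to the key claim that a braid commuting with every pure braid is itself pure, after which any $z\in\mathcal{Z}(B_n^G)$ lies in $\mathcal{Z}(P_n)=\langle\Delta^2\rangle$, while the reverse inclusion $\Delta^2\in\mathcal{Z}(B_n^G)$ is immediate because $\Delta^2\in P_n\le B_n^G$ is central in all of $B_n$. Where you genuinely diverge is in the proof of that key claim. The paper argues topologically: the closure of a pure braid is an ordered $n$-component link, conjugating by $\gamma\in B_n$ before closing off permutes the components by $\pi(\gamma)$, so a $\gamma$ centralising $P_n$ gives a permutation acting trivially on isotopy classes of ordered $n$-component links, forcing $\pi(\gamma)=1$ (this last implication is left implicit). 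You instead argue homologically: conjugation induces an action of $B_n/P_n\cong\mathfrak{S}_n$ on $H_1(P_n)\cong\Z^{\binom{n}{2}}$ permuting the basis $\{[A_{ij}]\}$ by the natural action on $2$-element subsets, and this action is faithful for $n\ge3$, which you verify explicitly. The two detection mechanisms are morally the same --- the linking numbers of the closure are precisely the coordinates of the class in $H_1(P_n)$ --- but your version is purely algebraic, every ingredient is standard and citable (the presentation of $P_n$, or Arnold's computation of $H^*(F(\C,n))$, together with the conjugation formulas), and it makes explicit the faithfulness verification that the paper glosses over; it is in the spirit of the algebraic proof that the paper's acknowledgements credit to Chen--Glover--Jensen. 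One wording quibble: the ``off-diagonal correction terms'' in the conjugation formulas do not vanish in $H_1(P_n)$; rather, $\sigma_k A_{ij}\sigma_k^{-1}$ is a $P_n$-conjugate of $A_{\tau(i)\tau(j)}$ with $\tau=(k\;k{+}1)$, so the two elements have equal image in the abelianisation (conjugating factors become invisible, they do not cancel to zero). This is a matter of phrasing and does not affect the correctness of your argument.
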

\begin{proof}
The case $n=2$ is clear since all the groups will be isomorphic to $\Z$. For the rest of the proof assume $n\ge3$.

It is a standard result that the full twist $\Delta^2$ generates $\mathcal{Z}(P_n)=\mathcal{Z}(B_n)$ (for a proof see for instance \cite[Theorem 1.24]{KT}). Since $\Delta^2$ commutes with everything (and in particular with braids in $B_n^G$) we therefore have $\mathcal{Z}(P_n)\leq \mathcal{Z}(B_n^G)$. It just remains to be shown that $\mathcal{Z}(B_n^G)\leq P_n$, since then it follows easily that $\mathcal{Z}(B_n^G)\leq \mathcal{Z}(P_n)$. In fact, a seemingly stronger statement is true: If a braid $\gamma\in B_n$ commutes with every pure braid, then $\gamma$ is itself a pure braid.

To see this note that it is possible to assign to each pure braid an \emph{ordered} link by closing off the braid, that is, by connecting together both ends of each strand in the braid. This is illustrated in the following picture.

\begin{center}
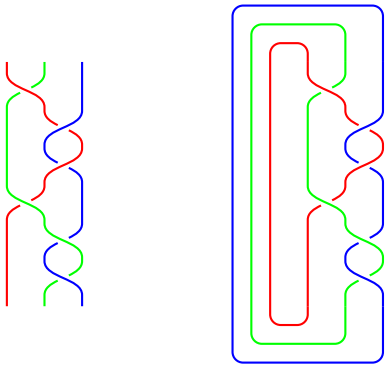

\begin{tikzpicture}[scale=0.5]
\braid[number of strands=3,thick,style strands={1}{red},style strands={2}{green},style strands={3}{blue}] a_1a_2^{-1}a_2^{-1}a_1a_2a_2;
\braid[number of strands=3,thick,style strands={1}{red},style strands={2}{green},style strands={3}{blue},xshift=-8cm] a_1a_2^{-1}a_2^{-1}a_1a_2a_2;
\draw[thick,red,rounded corners](1,0)--(1,0.5)--(0,0.5)--(0,-7)--(1,-7)--(1,-6.5);
\draw[thick,green,rounded corners](2,0)--(2,1)--(-0.5,1)--(-0.5,-7.5)--(2,-7.5)--(2,-6.5);
\draw[thick,blue,rounded corners](3,0)--(3,1.5)--(-1,1.5)--(-1,-8)--(3,-8)--(3,-6.5);
\end{tikzpicture}
\captionof{figure}{A pure braid and its closure.}
\end{center}

Note that conjugating a pure braid by a braid $\gamma\in B_n$ before closing it off will result in the same link, but with the ordering of its components permuted by $\pi(\gamma)$. Suppose that $\gamma\in B_n$ commutes with every braid in $P_n$. Then conjugation by $\gamma$ is the identity on $P_n$, and the permutation $\pi(\gamma)$ acts trivially on the set of isotopy classes of ordered links of $n$ components. It follows that $\pi(\gamma)=1$, so $\gamma$ is a pure braid.
\end{proof}

\section{Upper bounds}\label{Upper bounds}

Using the algebraic results of the previous section, we can now establish our main upper bound.

\begin{thm}\label{thm:torsionbound}
Suppose that $G\le \mathfrak{S}_n$ is such that $B_n^G/\mathcal{Z}(B_n^G)$ is torsion-free. Then
\[\TC(B_n^G)\le 2n-3.\]
\end{thm}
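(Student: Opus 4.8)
The plan is to apply Grant's upper bound \thmref{thm:upperbound} to $B_n^G$ and then to compute the cohomological dimension of the resulting quotient group by repeatedly invoking the duality group machinery of \thmref{thm:duality}. Write $Z\coloneqq\mathcal{Z}(B_n^G)$, which is infinite cyclic by \lemref{lem:centre}. Identifying $Z$ with its diagonal image, \thmref{thm:upperbound} gives
\[
\TC(B_n^G)\le\cld\!\left(\frac{B_n^G\times B_n^G}{Z}\right),
\]
so it suffices to bound $\cld$ of $Q\coloneqq(B_n^G\times B_n^G)/Z$ by $2n-3$. In fact I expect to show that $Q$ is a duality group of dimension exactly $2n-3$.

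First I would set up the relevant central extensions. As $Z$ is central in $B_n^G$, the subgroup $Z\times Z$ is central in $B_n^G\times B_n^G$ and contains the diagonal copy $\Delta(Z)$, with $(Z\times Z)/\Delta(Z)\cong\Z$. The third isomorphism theorem then produces a central extension
\[
1\longrightarrow\Z\longrightarrow Q\longrightarrow (B_n^G/Z)\times(B_n^G/Z)\longrightarrow 1.
\]
Granting for the moment that $B_n^G/Z$ is a duality group of dimension $n-2$, \thmref{thm:duality}(2) shows that $(B_n^G/Z)\times(B_n^G/Z)$ is a duality group of dimension $2n-4$, and a second application of \thmref{thm:duality}(2) to the displayed extension --- whose kernel $\Z$ is a duality group of dimension $1$ by \thmref{thm:duality}(3) --- shows that $Q$ is a duality group of dimension $2n-3$. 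Hence $\cld(Q)=2n-3$ and the theorem follows. (One could instead avoid duality of $Q$ and argue with subadditivity of $\cld$ under extensions, which also yields $\cld(Q)\le 2n-3$.)

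The crux, and the step I expect to be the main obstacle, is to prove that $B_n^G/Z$ is a duality group of dimension $n-2$; this is where the torsion-free hypothesis is used. The key geometric observation is that the affine group $\mathrm{Aff}(\C)=\{z\mapsto az+b\mid a\in\C^\ast,\ b\in\C\}$ acts freely on $F(\C,n)$, since fixing a labelled configuration forces $a=1$ and $b=0$, and that $\mathrm{Aff}(\C)\simeq S^1$. Normalising a configuration so that its first two points become $0$ and $1$ identifies the quotient $F(\C,n)/\mathrm{Aff}(\C)$ with $F(\C_2,n-2)$. The resulting principal bundle gives an exact sequence $1\to\pi_1(\mathrm{Aff}(\C))\to P_n\to\pi_1(F(\C_2,n-2))\to 1$ in which $\pi_1(\mathrm{Aff}(\C))\cong\Z$ maps onto the centre generated by the full twist; thus $P_n/Z\cong\tilde P_{n-2}$, which is a duality group of dimension $n-2$ by \lemref{lem:dualitygroups}. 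Since $\mathcal{Z}(P_n)=Z$ by \lemref{lem:centre} and $[B_n^G:P_n]=|G|<\infty$, the group $P_n/Z$ is a finite-index subgroup of $B_n^G/Z$. The hypothesis that $B_n^G/Z$ is torsion-free now permits the use of \thmref{thm:duality}(1) to conclude that $B_n^G/Z$ is itself a duality group of dimension $n-2$, as required. I anticipate that the delicate part is the geometric identification $P_n/Z\cong\tilde P_{n-2}$ --- checking freeness and properness of the affine action and verifying that the connecting map hits exactly the full twist --- while the remaining assembly is routine bookkeeping with the cited theorems.
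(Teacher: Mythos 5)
Your proof is correct, and while it shares the paper's skeleton---\thmref{thm:upperbound} to reduce to a cohomological dimension computation, then Bieri--Eckmann duality---the decomposition you use is genuinely different. Writing $Z=\mathcal{Z}(B_n^G)$ and $Q=(B_n^G\times B_n^G)/Z$, the paper never forms your central extension $1\to\Z\to Q\to(B_n^G/Z)\times(B_n^G/Z)\to 1$. Instead it notes that $Q$ is torsion-free (a consequence of the hypothesis which it leaves implicit) and contains $(P_n\times P_n)/\mathcal{Z}(P_n)$ with finite index, computes that subgroup to be $\tilde P_{n-2}\times\tilde P_{n-2}\times\Z$ via the splitting $P_n\cong\tilde P_{n-2}\times\Z$ coming from the homeomorphism $F(\C,n)\approx F(\C_2,n-2)\times\C_1\times\C$, and applies \thmref{thm:duality}(1) only once, to $Q$ itself, at the very end. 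You instead apply \thmref{thm:duality}(1) at the level of $B_n^G/Z$, where the torsion-freeness hypothesis is stated verbatim, and then climb back up to $Q$ with two applications of \thmref{thm:duality}(2). Your geometric input is the paper's in different clothing: your free $\mathrm{Aff}(\C)$-action admits a global section (normalise the first two points to $0$ and $1$), so the associated bundle is trivial and $F(\C,n)\approx\mathrm{Aff}(\C)\times F(\C_2,n-2)$, which is precisely the paper's splitting because $\mathrm{Aff}(\C)\approx\C^\ast\times\C=\C_1\times\C$; triviality also disposes of your worry about the connecting map, since it gives $P_n\cong\Z\times\tilde P_{n-2}$ with the $\Z$ factor generated by the full twist (the same standard identification of the centre that the paper also invokes without further proof). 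The trade-off: your arrangement uses the hypothesis exactly where it is given and avoids the paper's unstated check that torsion-freeness of $B_n^G/Z$ passes to $Q$, whereas the paper's arrangement avoids central-extension bookkeeping by exhibiting everything as a direct product. Both establish $\cld(Q)=2n-3$ exactly, and both are equally informal about the degenerate case $n=2$, where \lemref{lem:centre} no longer identifies $\mathcal{Z}(P_n)$ with $\mathcal{Z}(B_n^G)$; that case is trivial anyway.
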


\begin{proof}
Consider the group extension
\begin{equation*}
\xymatrix{ 1 \ar[r] &  \mathcal{Z}(B_n^G) \ar[r]^-{d} &  B_n^G\times B_n^G \ar[r] & (B_n^G\times B_n^G)/\mathcal{Z}(B_n^G) \ar[r] & 1,}
\end{equation*}
where $d$ denotes the diagonal embedding. By Theorem \ref{thm:upperbound}, it will suffice to prove that the quotient group $(B_n^G\times B_n^G)/\mathcal{Z}(B_n^G)$ has cohomological dimension $2n-3$. It follows from the assumptions that this quotient is torsion-free, and by Lemma \ref{lem:centre} it contains $(P_n\times P_n)/\mathcal{Z}(P_n)$ as a subgroup of finite index. Thus we are reduced (by Theorem \ref{thm:duality}(1)) to showing that $(P_n\times P_n)/\mathcal{Z}(P_n)$ is a duality group of dimension $2n-3$.

Using the operations of addition and multiplication in $\C$, it is easily shown that the first two Fadell--Neuwirth fibrations used in the proofs of Lemmas \ref{lem:keypione} and \ref{lem:dualitygroups} are in fact trivial, so that we have a splitting up to homeomorphism
\[F(\C,n)\approx F(\C_2,n-2)\times\C_1\times\C.\]
On fundamental groups this gives a splitting $P_n\cong\tilde{P}_{n-2}\times\Z$, under which the centre $\mathcal{Z}(P_n)$ corresponds to $\{1\}\times\Z$. We therefore find that
\[
(P_n\times P_n)/\mathcal{Z}(P_n) \cong (\tilde{P}_{n-2}\times\Z\times \tilde{P}_{n-2}\times\Z)/\mathcal{Z}(P_n) \cong \tilde{P}_{n-2}\times \tilde{P}_{n-2}\times \Z.
\]
Since the group $\tilde{P}_{n-2}$ was shown in \lemref{lem:dualitygroups} to be a duality group of dimension $n-2$, the result now follows from Theorem \ref{thm:duality}(2).
\end{proof}

We now give examples of subgroups $G\leq \mathfrak{S}_n$ for which the assumption of the theorem holds. We continue to denote the mixed braid group $B_n^{\mathfrak{S}_{n-k}\times \mathfrak{S}_k}$ from Example \ref{ex:mixed} by $B_{n-k,k}$.

\begin{prop}\label{prop:torsionfree}
The mixed braid group modulo its centre $B_{n-k,k}/\mathcal{Z}(B_{n-k,k})$ is torsion-free if and only if $(n,k)=(n-1,k)=(n-1,k-1)=1$.
\end{prop}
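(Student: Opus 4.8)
The plan is to detect torsion in $B_{n-k,k}/\mathcal{Z}(B_{n-k,k})$ through \emph{periodic braids}, i.e.\ roots of powers of the full twist. By \lemref{lem:centre} the centre is the infinite cyclic group $\langle\Delta^2\rangle$, and $B_{n-k,k}$ is torsion-free since it has a finite-dimensional $K(\pi,1)$. Hence a class in the quotient is a nontrivial torsion element precisely when it is represented by some $\gamma\in B_{n-k,k}$ with $\gamma\notin\langle\Delta^2\rangle$ but $\gamma^m=\Delta^{2j}$ for some $m\ge2$ and some $j\in\Z$. Thus the proposition reduces to the assertion that \emph{$B_{n-k,k}$ contains a non-central periodic braid if and only if at least one of $(n,k)$, $(n-1,k)$, $(n-1,k-1)$ exceeds $1$}.

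To analyse periodic braids I would invoke their classical classification (Eilenberg, de Ker\'ekj\'art\'o; see also \cite{KT}): every periodic element of $B_n$ is conjugate in $B_n$ to a power of $\delta=\sigma_1\sigma_2\cdots\sigma_{n-1}$, which satisfies $\delta^n=\Delta^2$ and has $\pi(\delta)=(1\,2\,\cdots\,n)$, or to a power of a second element $\epsilon$ satisfying $\epsilon^{n-1}=\Delta^2$ with $\pi(\epsilon)$ an $(n-1)$-cycle fixing one point. Since conjugate braids have permutations of the same cycle type, membership of $\gamma$ in $B_{n-k,k}$ forces $\pi(\gamma)\in\mathfrak{S}_{n-k}\times\mathfrak{S}_k$, so the cycle type of the relevant power of $\delta$ or $\epsilon$ must be distributable across the partition of $\{1,\dots,n\}$ into blocks of sizes $n-k$ and $k$, each block being a union of whole cycles.

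The heart of the argument is then a short cycle count. For the $\delta$-family, $\pi(\delta^a)$ consists of $(a,n)$ cycles all of length $\ell=n/(a,n)$, and $\delta^a$ is non-central exactly when $\ell>1$; splitting equal-length cycles into the two blocks forces $\ell\mid(n-k)$ and $\ell\mid k$, that is $\ell\mid(n,k)$. For the $\epsilon$-family, $\pi(\epsilon^b)$ consists of $(b,n-1)$ cycles of length $\ell'=(n-1)/(b,n-1)$ together with one fixed point, and is non-central exactly when $\ell'>1$; assigning the fixed point to the $k$-block forces $\ell'\mid(k-1)$ and $\ell'\mid(n-k)$, i.e.\ $\ell'\mid(n-1,k-1)$, whereas assigning it to the other block forces $\ell'\mid(n-1,k)$. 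In each case a value $\ell>1$ (resp.\ $\ell'>1$) solving the divisibility exists if and only if the corresponding gcd exceeds $1$, and these are exactly the three conditions in the statement.

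Finally I would run the equivalence in both directions. If one of the three gcds exceeds $1$, choose the appropriate power of $\delta$ or $\epsilon$ whose (non-central) permutation can, after conjugation by a suitable $\tau\in\mathfrak{S}_n$, be arranged to lie in $\mathfrak{S}_{n-k}\times\mathfrak{S}_k$; lifting $\tau$ to a braid $w$ via surjectivity of $\pi$, the conjugate $w\delta^aw^{-1}$ (resp.\ $w\epsilon^bw^{-1}$) is a non-central periodic braid in $B_{n-k,k}$, producing torsion in the quotient. Conversely, if all three gcds equal $1$, the count forces $\ell=1$ or $\ell'=1$, hence $\pi(\gamma)=1$; then the periodic element $\gamma$ is conjugate to a \emph{central} power of $\delta$ or $\epsilon$ and so equals it, i.e.\ is central, giving no torsion. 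I expect the main obstacle to be the reliance on the periodic-braid classification, together with the careful bookkeeping in the $\epsilon$-family, where the single fixed point may be placed in either block and therefore yields two separate divisibility conditions.
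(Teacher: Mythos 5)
Your proposal is correct and takes essentially the same route as the paper's proof: both reduce torsion in the quotient to the existence of non-central periodic braids in $B_{n-k,k}$, invoke the classification of periodic braids as conjugates of powers of $\delta$ and $\epsilon$ (the paper cites Gonz\'alez-Meneses \cite{GM} for this), and perform the same cycle-type divisibility count yielding the three gcd conditions, with conjugating permutations lifted through $\pi$ for the converse. If anything, your write-up is slightly more explicit than the paper's about the non-centrality criterion $\ell>1$ (resp.\ $\ell'>1$) and about the two possible placements of the fixed point in the $\epsilon$-family.
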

\begin{proof}

The centre $\mathcal{Z}(B_{n-k,k})$ is infinite cyclic generated by the full twist $\Delta^2=(\sigma_1\ldots\sigma_{n-1})^n$, by \lemref{lem:centre}. Thus torsion elements in $B_{n-k,k}/\mathcal{Z}(B_{n-k,k})$ will be represented by braids $\alpha\in B_{n-k,k}$ such that $\alpha^m=\Delta^l$, for some powers $m,l\in\Z$, but $\alpha$ is not a power of $\Delta^2$ itself.

Viewing the braid group $B_n$ as the fundamental group of the configuration space $F(\C,n)/\mathfrak{S}_n$, the full twist corresponds to a rotation of the point configuration by $2\pi$. Section~2.2 from \cite{GM} tells us that all torsion elements are represented by rotations up to conjugation. To be precise, there exist $m,l\in\Z$ such that $\alpha^m=\Delta^l$ if and only if there exist $\beta\in B_n$ and $p\in\Z$ such that either $\alpha=\beta\delta^p\beta^{-1}$ or $\alpha=\beta\epsilon^p\beta^{-1}$. Here the elements $\delta$ and $\epsilon$ can be expressed in terms of the standard generators as
\[\delta=\sigma_1\ldots\sigma_{n-1}\text{ and }\epsilon=\sigma_1(\sigma_1\ldots\sigma_{n-1}),\] and correspond to minimal rotations in $F(\C,n)/\mathfrak{S}_n$, where in the case of $\epsilon$ one of the points in the configuration is a fixed point of the rotation.

Recall that $\pi$ denotes the canonical projection $\pi\colon B_n\to\mathfrak{S}_n$.

Observe that $\pi(\delta)$ is an $n$-cycle and $\pi(\epsilon)$ is an $(n-1)$-cycle times a 1-cycle. Therefore $\pi(\delta)^p$ is a product of $d$ $n/d$-cycles, where $d=(p,n)$ is the greatest common divisor of $p$ and $n$, which could be 1. Similarly $\pi(\epsilon)^q$ is a product of $d$ $(n-1)/d$-cycles and a 1-cycle, where $d=(q,n-1)$.

Recall that elements in the symmetric group are conjugate if and only if they have the same cycle structure. Therefore, $\pi(\delta)^p$ is conjugate to an element in $\mathfrak{S}_{n-k}\times\mathfrak{S}_k$ if and only if $n/(p,n)$ divides $k$. Similarly, $\pi(\epsilon)^q$ is conjugate to an element in $\mathfrak{S}_{n-k}\times\mathfrak{S}_k$ if and only if $(n-1)/(q,n-1)$ divides $k$ or $k-1$.

One implication in the theorem now follows from combining the above observations. If $B_{n-k,k}/\mathcal{Z}(B_{n-k,k})$ had torsion there would exist an element $\alpha\in B_{n-k,k}$ which is conjugate to $\delta^p$ or $\epsilon^q$. Therefore, there would exist an element $\pi(\alpha)\in\mathfrak{S}_{n-k}\times\mathfrak{S}_k$ conjugate to $\pi(\delta)^p$ or $\pi(\epsilon)^q$. This implies that either $(n,k)\ne1$ or $(n-1,k)\ne1$ or $(n-1,k-1)\ne1$.

The converse implication follows from a similar argument. Assume that for instance $(n,k)\ne1$. Then there exists an element $\alpha\in\mathfrak{S}_{n-k}\times\mathfrak{S}_k$ conjugate to $\pi(\delta)^p$:
\[\alpha=\beta\pi(\delta)^p\beta^{-1}.\]

Taking a lift $\sigma_\beta$ of $\beta$ yields an element $\sigma_\beta\delta^p\sigma_\beta^{-1}$ of $B_{n-k,k}$.
\end{proof}

\begin{cor}\label{cor:torsion1}
If $G\le \mathfrak{S}_{n-k}\times \mathfrak{S}_k$ with $(n,k)=(n-1,k)=(n-1,k-1)=1$, then the group $B_n^G/\mathcal{Z}(B_n^G)$ is torsion-free.
\end{cor}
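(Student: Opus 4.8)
The plan is to deduce the statement directly from \propref{prop:torsionfree} by exhibiting $B_n^G/\mathcal{Z}(B_n^G)$ as a \emph{subgroup} of the quotient $B_{n-k,k}/\mathcal{Z}(B_{n-k,k})$, whose torsion-freeness has just been characterised, and then invoking the elementary fact that any subgroup of a torsion-free group is torsion-free.

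First I would observe that the hypothesis $G\le\mathfrak{S}_{n-k}\times\mathfrak{S}_k$ gives, upon applying $\pi^{-1}$, an inclusion of braid groups $B_n^G\le B_{n-k,k}$. The crucial next step is to note that by \lemref{lem:centre} the centres of both groups coincide: for $n\ge3$ each equals $\mathcal{Z}(P_n)=\langle\Delta^2\rangle$, while the case $n=2$ is trivial since all groups in sight reduce to $\Z$ with trivial quotient. Because the inclusion therefore carries $\mathcal{Z}(B_n^G)$ isomorphically onto $\mathcal{Z}(B_{n-k,k})$, it descends to a homomorphism $B_n^G/\mathcal{Z}(B_n^G)\to B_{n-k,k}/\mathcal{Z}(B_{n-k,k})$ on the quotients. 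I would then verify that this induced map is injective: any element of its kernel is represented by some $\gamma\in B_n^G$ lying in $\mathcal{Z}(B_{n-k,k})=\langle\Delta^2\rangle=\mathcal{Z}(B_n^G)$, so its class in $B_n^G/\mathcal{Z}(B_n^G)$ is already trivial. Hence $B_n^G/\mathcal{Z}(B_n^G)$ embeds into $B_{n-k,k}/\mathcal{Z}(B_{n-k,k})$.

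Finally, the gcd conditions $(n,k)=(n-1,k)=(n-1,k-1)=1$ are precisely the hypotheses under which \propref{prop:torsionfree} guarantees that $B_{n-k,k}/\mathcal{Z}(B_{n-k,k})$ is torsion-free, and passing to the subgroup obtained above yields the claim at once. The only point demanding genuine care — and the sole potential obstacle — is the equality of the two centres, since the embedding on quotients is only well-defined and injective once one knows that $\mathcal{Z}(B_n^G)$ and $\mathcal{Z}(B_{n-k,k})$ are literally the same subgroup $\langle\Delta^2\rangle$ rather than merely abstractly isomorphic. But this coincidence is exactly the content of \lemref{lem:centre}, which computes the centre of every $B_n^H$ to be $\langle\Delta^2\rangle$, so no real difficulty arises and the remainder of the argument is formal.
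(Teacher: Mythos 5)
Your proposal is correct and follows essentially the same route as the paper's own proof: note $B_n^G\le B_{n-k,k}$, use \lemref{lem:centre} to identify the two centres as the same subgroup $\langle\Delta^2\rangle$, conclude that $B_n^G/\mathcal{Z}(B_n^G)$ embeds in $B_{n-k,k}/\mathcal{Z}(B_{n-k,k})$, and apply \propref{prop:torsionfree}. Your write-up merely makes explicit the injectivity of the induced map on quotients, which the paper leaves implicit.
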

\begin{proof}
If $G\le \mathfrak{S}_{n-k}\times \mathfrak{S}_k$, then $B_n^G$ is a subgroup of $B_{n-k,k}$ but by \lemref{lem:centre}, the centre of both groups is the same. Therefore, if $B_{n-k,k}/\mathcal{Z}(B_{n-k,k})$ is torsion-free then so is $B_n^G/\mathcal{Z}(B_n^G)$.
\end{proof}

\begin{prop}\label{prop:torsion2}
If $G=H\times\{1\}^2\le\mathfrak{S}_n$ where $H\le \mathfrak{S}_{n-2}$, then the group $B_n^G/\mathcal{Z}(B_n^G)$ is torsion-free.
\end{prop}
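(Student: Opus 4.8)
The plan is to follow the same strategy as in the proof of \propref{prop:torsionfree}, but to exploit the fact that every permutation in $G=H\times\{1\}^2$ fixes the last two points, which yields a clean contradiction by counting fixed points and avoids any congruence hypotheses on $n$. By \lemref{lem:centre} the centre $\mathcal{Z}(B_n^G)$ is infinite cyclic, generated by the full twist $\Delta^2=(\sigma_1\ldots\sigma_{n-1})^n$. A nontrivial torsion element of $B_n^G/\mathcal{Z}(B_n^G)$ would be represented by some $\alpha\in B_n^G$ with $\alpha^m=\Delta^l$ for some $m,l\in\Z$, yet with $\alpha$ not itself a power of $\Delta^2$.

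First I would invoke the same input from Section~2.2 of \cite{GM} used previously: the relation $\alpha^m=\Delta^l$ holds for some $m,l$ if and only if $\alpha$ is conjugate in $B_n$ to a power $\delta^p$ or $\epsilon^q$ of the minimal rotations $\delta=\sigma_1\ldots\sigma_{n-1}$ and $\epsilon=\sigma_1(\sigma_1\ldots\sigma_{n-1})$. Next I would dispose of the degenerate cases. Since $\delta^n=\Delta^2$ and $\epsilon^{n-1}=\Delta^2$, if $n\mid p$ or $(n-1)\mid q$ then $\delta^p$ or $\epsilon^q$ is central, forcing $\alpha=\beta\delta^p\beta^{-1}=\delta^p$ (resp.\ $\alpha=\epsilon^q$) to be a power of $\Delta^2$, contrary to nontriviality. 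Hence I may assume $n\nmid p$ and $(n-1)\nmid q$.

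The heart of the argument is then a fixed-point count for the associated permutations. Applying $\pi$ to $\alpha=\beta\delta^p\beta^{-1}$ (resp.\ $\alpha=\beta\epsilon^q\beta^{-1}$) shows that $\pi(\alpha)$ is conjugate in $\mathfrak{S}_n$ to $\pi(\delta)^p$ (resp.\ $\pi(\epsilon)^q$). As computed in \propref{prop:torsionfree}, $\pi(\delta)^p$ is a product of $(p,n)$ cycles each of length $n/(p,n)$, which is $\ge2$ precisely because $n\nmid p$; hence $\pi(\delta)^p$ has no fixed points. Similarly $\pi(\epsilon)^q$ is a product of $(q,n-1)$ cycles each of length $(n-1)/(q,n-1)\ge2$ together with a single $1$-cycle, so it has exactly one fixed point. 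Since the number of fixed points is a conjugacy invariant, $\pi(\alpha)$ would have at most one fixed point. But $\pi(\alpha)\in G=H\times\{1\}^2$ fixes both of the last two points, hence has at least two fixed points -- a contradiction. Therefore no such $\alpha$ exists, and $B_n^G/\mathcal{Z}(B_n^G)$ is torsion-free.

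The only routine verifications I anticipate are the relations $\delta^n=\Delta^2$ (already recorded in \lemref{lem:centre}) and $\epsilon^{n-1}=\Delta^2$, needed to confirm that the degenerate cases are genuinely central; both follow from the rotation description in \cite{GM}. The genuinely load-bearing step is the elementary observation that a nontrivial periodic braid of either type fixes \emph{at most one} strand, whereas every element of $G$ fixes \emph{two}. This is exactly the place where the hypothesis $G\le\mathfrak{S}_{n-2}\times\{1\}^2$ (two pure strands, rather than a merely preserved $2$-block as in \propref{prop:torsionfree}) does its work, and it is what allows the conclusion to hold with no number-theoretic conditions on $n$.
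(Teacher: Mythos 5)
Your proof is correct, but it takes a genuinely different route from the paper's. The paper does not revisit the periodic-braid classification at all: it instead uses the homeomorphism $F(\C,n)\approx F(\C_2,n-2)\times\C_1\times\C$ from the proof of \thmref{thm:torsionbound}, which is compatible with the action of $G=H\times\{1\}^2$ (citing \cite{CGJ} for this compatibility), to obtain a splitting $B_n^G\cong\tilde{B}_{n-2}^H\times\Z$ under which $\mathcal{Z}(B_n^G)$ is exactly the $\Z$ factor; hence $B_n^G/\mathcal{Z}(B_n^G)\cong\tilde{B}_{n-2}^H=\pi_1\left(F(\C_2,n-2)/H\right)$, which is torsion-free because it has a finite-dimensional $K(\pi,1)$. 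You instead rerun the analysis of \propref{prop:torsionfree}, replacing its cycle-type bookkeeping by a fixed-point count: a representative of a nontrivial torsion class would be conjugate to a non-central $\delta^p$ or $\epsilon^q$, whose image in $\mathfrak{S}_n$ has at most one fixed point, whereas every element of $G$ fixes the last two points. All your steps check out --- the relations $\delta^n=\epsilon^{n-1}=\Delta^2$ are standard and implicit in the rotation description quoted from \cite{GM}, the degenerate central cases are disposed of correctly, and the number of fixed points is indeed a conjugacy invariant. Your approach has two advantages: it needs no input beyond what \propref{prop:torsionfree} already uses (in particular, no appeal to \cite{CGJ}), and it actually proves a stronger statement, namely that $B_n^G/\mathcal{Z}(B_n^G)$ is torsion-free whenever every element of $G$ fixes at least two points, not just when $G$ fixes the last two points globally. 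The paper's approach has a different advantage: the explicit direct-product decomposition $B_n^G\cong\left(B_n^G/\mathcal{Z}(B_n^G)\right)\times\mathcal{Z}(B_n^G)$ is structural information reused in the remark following \thmref{thm:upperresult}, where it gives an alternative proof of the upper bound $\TC(B_n^G)\le 2n-3$ via the product formula; your argument yields no such splitting.
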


\begin{proof}
In the proof of \thmref{thm:torsionbound} it was observed that there is a homeomorphism $F(\C,n)\approx F(\C_2,n-2)\times\C_1\times\C$. This splitting is compatible with the action of $G=H\times\{1\}^2$ (as is clear from writing out explicit formulae, or see Lemma 3.1 and Theorem 3.3 of \cite{CGJ} for a proof). Therefore we have a homeomorphism $$F(\C,n)/G\approx (F(\C_2,n-2)/H)\times \C_1\times \C,$$ which on fundamental groups gives the splitting $B_n^G\cong \tilde{B}_{n-2}^H\times \Z$, where $\tilde{B}_{n-2}^H\coloneqq\pi_1(F(\C_2,n-2)/H)$. Under this splitting the centre $\mathcal{Z}(B_n^G)=\Z$ corresponds to $\{1\}\times\Z$, and therefore we have $B_n^G/\mathcal{Z}(B_n^G)\cong \tilde{B}_{n-2}^H$.

Finally we observe that $\tilde{B}_{n-2}^H$ is torsion-free because it has the finite-dimensional space $F(\C_2,n-2)/H$ as its $K(\pi,1)$.
\end{proof}

Putting together \corref{cor:torsion1}, \propref{prop:torsion2} and \thmref{thm:torsionbound} yields:

\begin{thm}\label{thm:upperresult}
Let $G\le \mathfrak{S}_{n-2}\times\{1\}^2\le \mathfrak{S}_n$ or $G\le \mathfrak{S}_{n-k}\times \mathfrak{S}_k$ with $(n,k)=(n-1,k)=(n-1,k-1)=1$. Then \[\TC(B_n^G)\le 2n-3.\]
\end{thm}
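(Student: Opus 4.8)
The plan is to recognise that this theorem is a direct consequence of the machinery already assembled, with the key reduction supplied by \thmref{thm:torsionbound}: that result asserts $\TC(B_n^G)\le 2n-3$ whenever the quotient $B_n^G/\mathcal{Z}(B_n^G)$ is torsion-free. Thus the entire content of the statement lies in verifying this torsion-freeness hypothesis under each of the two standing assumptions on $G$. Since both hypotheses have been arranged precisely to guarantee torsion-freeness in the preceding results, the proof becomes a matter of citing the correct lemma in each case and then invoking \thmref{thm:torsionbound}.

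First I would dispose of the case $G\le \mathfrak{S}_{n-k}\times \mathfrak{S}_k$ with $(n,k)=(n-1,k)=(n-1,k-1)=1$. This is exactly the hypothesis of \corref{cor:torsion1}, which (building on the number-theoretic analysis of \propref{prop:torsionfree}, where one checks that under these coprimality conditions no power of the minimal-rotation braids $\delta=\sigma_1\ldots\sigma_{n-1}$ or $\epsilon=\sigma_1(\sigma_1\ldots\sigma_{n-1})$ can be conjugated into $\mathfrak{S}_{n-k}\times\mathfrak{S}_k$) yields that $B_n^G/\mathcal{Z}(B_n^G)$ is torsion-free.

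Next I would treat the case $G\le \mathfrak{S}_{n-2}\times\{1\}^2$. Here I would first observe that any subgroup of $\mathfrak{S}_{n-2}\times\{1\}^2$ fixes the last two symbols, and hence is automatically of the form $G=H\times\{1\}^2$ for some $H\le \mathfrak{S}_{n-2}$. This places us squarely in the situation of \propref{prop:torsion2}, which (via the compatible splitting $F(\C,n)/G\approx (F(\C_2,n-2)/H)\times\C_1\times\C$ and the resulting identification $B_n^G/\mathcal{Z}(B_n^G)\cong \tilde{B}_{n-2}^H$) again gives torsion-freeness.

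With the torsion-freeness of $B_n^G/\mathcal{Z}(B_n^G)$ established in both cases, applying \thmref{thm:torsionbound} completes the argument. I do not expect any genuine obstacle in this final assembly: the substantive work, namely the duality-group computation showing that $(P_n\times P_n)/\mathcal{Z}(P_n)$ has cohomological dimension $2n-3$ inside \thmref{thm:torsionbound}, together with the two torsion-freeness verifications, has already been carried out. The only point requiring a moment's care is matching each hypothesis on $G$ to the lemma designed for it, and recognising that a subgroup of $\mathfrak{S}_{n-2}\times\{1\}^2$ necessarily splits as $H\times\{1\}^2$.
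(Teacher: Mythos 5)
Your proposal is correct and matches the paper's proof exactly: the paper obtains this theorem by combining Corollary~\ref{cor:torsion1}, Proposition~\ref{prop:torsion2} and Theorem~\ref{thm:torsionbound}, precisely as you do. Your extra remark that any subgroup of $\mathfrak{S}_{n-2}\times\{1\}^2$ automatically has the form $H\times\{1\}^2$ is a valid (and slightly more careful) way of matching the hypothesis of Proposition~\ref{prop:torsion2}.
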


\begin{rem}
In the case $G\leq \mathfrak{S}_{n-2}\times\{1\}^2$, the upper bound $\TC(B_n^G)\le 2n-3$ can be deduced from the fact that there is a splitting
\[
B_n^G \cong \left(B_n^G/\mathcal{Z}(B_n^G)\right)\times \mathcal{Z}(B_n^G),
\]
together with the product formula for topological complexity \cite[Theorem 11]{Far03}. We do not know if the upper bound $\TC(B_n^G)\le 2n-3$ for $G\leq\mathfrak{S}_{n-k}\times \mathfrak{S}_k$ can be obtained in this way. For instance, we do not know if the centre of the mixed braid group $B_{5,3}$ splits off as a direct factor.
\end{rem}

\section{Lower bounds}\label{Lower bounds}

We now give lower bounds for $\TC(B_n^G)$ in certain cases, using Theorem \ref{thm:lowerbound}. Our arguments generalise the argument given in \cite[Proposition 3.3]{GLO} for pure braids.

\begin{thm}\label{lower1}
Let $G\le \mathfrak{S}_{n-k}\times \mathfrak{S}_k$ for $k\ge2$. Then \[\TC(B_n^G)\ge 2n-k-1.\]

Furthermore, if $G\le \mathfrak{S}_{n-1}\times \mathfrak{S}_1$, then \[\TC(B_n^G)\ge 2n-3.\]
\end{thm}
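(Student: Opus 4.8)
The plan is to apply the Grant--Lupton--Oprea lower bound (Theorem~\ref{thm:lowerbound}) with a well-chosen pair of subgroups $A,B\le B_n^G$. To make the theorem yield a large lower bound, I want $A\times B$ to have large cohomological dimension, while simultaneously ensuring the conjugacy condition $gAg^{-1}\cap B=\{1\}$ for all $g\in\pi=B_n^G$. Since every $B_n^G$ contains $P_n$, and $P_n$ has cohomological dimension $n-1$, the natural first attempt is to take $A$ and $B$ to be subgroups of $P_n$ realised geometrically as fundamental groups of sub-configuration spaces, so that their cohomological dimensions are computable via the Fadell--Neuwirth machinery of Lemma~\ref{lem:fadellneuwirth} and the duality-group results of Lemma~\ref{lem:dualitygroups}.

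For the first bound $\TC(B_n^G)\ge 2n-k-1$, I would take $A=P_n$ (dimension $n-1$) and let $B$ be the subgroup of pure braids in which the last $k$ strands are free to move but the first $n-k$ strands are held in a fixed ``braided together'' configuration --- concretely $B\cong\pi_1(F(\C_{n-k},k))$, a duality group of dimension $k$, realised by letting only a $k$-element subset of strands move among punctures representing the others. Then $\cld(A\times B)=(n-1)+k$ would give $2n-k-1$ only after I am careful about which factor carries which dimension; more precisely I expect to take $A$ of dimension $n-1$ and $B$ of dimension $n-k$, or symmetric choices, and the arithmetic should be arranged so the two dimensions sum to $2n-k-1$. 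The key point is the disjointness condition: I would argue that a nontrivial element of $B$ ``links'' a specific subset of strands nontrivially, and that no conjugate of such an element can lie in the complementary subgroup $A$ (or vice versa), because conjugation in $B_n^G$ at most permutes strands according to $\pi(g)\in G\le\mathfrak{S}_{n-k}\times\mathfrak{S}_k$, which preserves the block decomposition into the first $n-k$ and last $k$ strands. This block-preservation is exactly why the hypothesis $G\le\mathfrak{S}_{n-k}\times\mathfrak{S}_k$ is needed.

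For the second, sharper bound $\TC(B_n^G)\ge 2n-3$ under $G\le\mathfrak{S}_{n-1}\times\mathfrak{S}_1$, the idea is that fixing the last strand (the $\mathfrak{S}_1$ factor means $\pi(g)$ fixes the $n$th point) lets me choose both $A$ and $B$ of dimension $n-1$ while still maintaining disjointness. I would take $A=P_n$ and $B$ to be a copy of $\pi_1(F(\C_1,n-1))$ obtained by pinning down one puncture --- again of cohomological dimension $n-1$ --- so that $\cld(A\times B)=2n-2$; but since one strand is globally fixed, a careful analysis should show the effective disjoint contribution drops by one, yielding $2n-3$ rather than $2n-2$. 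Alternatively, and more cleanly, I expect to take $A$ of dimension $n-1$ and $B$ of dimension $n-2$ (using $\tilde P_{n-2}=\pi_1(F(\C_2,n-2))$ from Lemma~\ref{lem:dualitygroups}), summing to $2n-3$, with the fixed last strand guaranteeing the conjugacy-disjointness.

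The main obstacle will be verifying the disjointness hypothesis $gAg^{-1}\cap B=\{1\}$ for \emph{every} $g\in B_n^G$, not merely for $g\in P_n$. The purely pure-braid case handled in \cite[Proposition 3.3]{GLO} only needs to conjugate by pure braids, where the strand labels are preserved on the nose; here conjugation by a general $G$-braid $g$ permutes the strands via $\pi(g)$. The crux is to choose $A$ and $B$ so that their ``support sets'' of strands interact in a way that is robust under any permutation allowed by $G$: because $G$ respects the block structure $\{1,\dots,n-k\}\sqcup\{n-k+1,\dots,n\}$ (or fixes the last strand, in the second case), a nontrivial element of $B$ must involve a generator linking strands that no conjugate by $G$ can carry into $A$. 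I anticipate phrasing this via an invariant such as a linking number or a homomorphism to $\Z$ detecting a specific pair of strands, which is nonzero on $B$ but vanishes on all $G$-conjugates of $A$; establishing the existence and equivariance of such an invariant is the technical heart of the argument.
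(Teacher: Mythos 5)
There is a genuine gap, and it is fatal to the proposal as written: you take $A=P_n$ as one of the two subgroups. Theorem~\ref{thm:lowerbound} requires $gAg^{-1}\cap B=\{1\}$ for \emph{every} $g\in B_n^G$, and in particular (taking $g=1$) it requires $A\cap B=\{1\}$. But every candidate $B$ you propose ($\pi_1(F(\C_{n-k},k))$, $\pi_1(F(\C_1,n-1))$, $\tilde P_{n-2}$) is a subgroup of $P_n$, so $A\cap B=B\neq\{1\}$ and the hypothesis fails before conjugation even enters the picture. (The failure is doubly unavoidable: $P_n$ is normal in $B_n^G$, so all its conjugates are $P_n$ itself.) Your fallback for the second bound --- that the bound $\cld(A\times B)=2n-2$ should ``drop by one'' to $2n-3$ because a strand is fixed --- has no basis: Theorem~\ref{thm:lowerbound} gives exactly $\cld(A\times B)$, with no mechanism for such an adjustment, and in fact $\TC(B_n^G)\le 2n-2$ means a valid pair with $\cld(A\times B)=2n-2$ simply cannot exist here. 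What your proposal is missing is a subgroup of cohomological dimension $n-1$ that is \emph{not} all of $P_n$ and is thin enough to avoid all conjugates of the second subgroup. The paper's key construction is exactly this: the free abelian subgroup $A_n\cong\Z^{n-1}\le P_n$ generated by the braids $\alpha_j$ that wrap the $j$th strand around all subsequent strands. The paper pairs $A_n$ (dimension $n-1$) with $P_{n-k+1}$ (dimension $n-k$, embedded in $P_n$ by adding $k-1$ non-interacting strands), giving $(n-1)+(n-k)=2n-k-1$; for $G\le\mathfrak{S}_{n-1}\times\mathfrak{S}_1$ it pairs $A_n$ with $P_{n-1}$, giving $(n-1)+(n-2)=2n-3$.

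Your instincts about the crux are otherwise sound and match the paper's: disjointness up to conjugation is verified via linking data that is invariant under the block-preserving permutations in $G\le\mathfrak{S}_{n-k}\times\mathfrak{S}_k$. Concretely, the paper closes braids off to ordered links and observes that conjugating by $\gamma\in B_n^G$ permutes the components of the closure by $\pi(\gamma)\in G$; a nontrivial element of $A_n$ closes to a link in which at least two of the last $k$ components are linked with something (detected by the exponents $m_j$ in $\alpha_1^{m_1}\cdots\alpha_{n-1}^{m_{n-1}}$, which are linking numbers), whereas an element of $P_{n-k+1}$ closes to a link in which at most one of the last $k$ components is linked with anything. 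Since $G$ never exchanges the last $k$ components with the first $n-k$, these two types of links can never agree, which is precisely the invariant-based disjointness argument you anticipated --- but it only works once the dimension-$(n-1)$ subgroup is replaced by $A_n$.
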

\begin{proof}

Suppose $G\le \mathfrak{S}_{n-k}\times \mathfrak{S}_k$ for $k\ge2$. To apply \thmref{thm:lowerbound}, we need to give two subgroups of $B_n^G$ which contain no non-trivial conjugates. The following subgroups satisfy this assumption:

\begin{enumerate}
\item Think of $P_{n-k+1}$ for $k\ge 2$ as a subgroup of $P_n$ by adding $k-1$ strands which do not interact with any other strand.
\item Denote by $A_n$ the subgroup of $P_n$ generated by braids of the form \[\alpha_j=\sigma_{j}\sigma_{j+1}\ldots \sigma_{n-1}\sigma_n\sigma_n\sigma_{n-1}\ldots \sigma_{j+1}\sigma_{j},\]
for $1\le j\le n-1$. In words, the generator $\alpha_j$ takes the $j$th strand over the last $n-j$ strands and back under them until it gets back to its original position, see the braid diagrams below. Looking at the braid diagrams it becomes evident that the generators commute, and therefore $A_n\cong\Z^{n-1}$.
\end{enumerate}

Since $P_n\leq B_n^G$, these are indeed both subgroups of $B_n^G$. It remains to show that $P_{n-k+1}\cap \gamma A_n\gamma^{-1}=\{1\}$ for all $\gamma\in B_n^G$.

Recall that closing off a pure braid yields an ordered link, and that conjugating a pure braid in $P_n$ by an element of $B_n^G$ before closing it off will result in an isotopic link up to permutation of its components by an element of $G$. In light of the above, it suffices to show that the links coming from closing off nontrivial elements of $P_{n-k+1}$ can not be obtained by permuting with an element in $G$ the components of the link coming from closing off an element of $A_n$.

 For this we note that if we close a non-trivial braid in $A_n$, the last two components of the link will be linked either with some other component or with each other. This can be seen for the generators $\alpha_i$ by taking a look at the following braid diagrams.

\vspace{.3cm}
\begin{minipage}{.48\linewidth}
\begin{center}
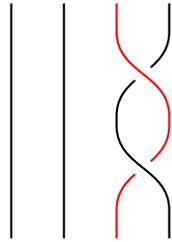

\begin{tikzpicture}[xscale=0.7,yscale=1.25]
\braid[number of strands=4,thick,style strands={3}{red}] 1 a_3a_3 1;
\end{tikzpicture}
\captionof{figure}{The braid $\alpha_{3}\in A_4$. }
\end{center}
\end{minipage}%
\begin{minipage}{.48\linewidth}
\begin{center}
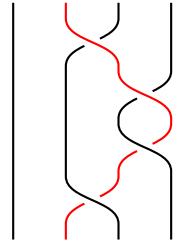

\begin{tikzpicture}[scale=0.7]
\braid[number of strands=4,thick,style strands={2}{red}] a_2a_3a_3a_2;
\end{tikzpicture}
\captionof{figure}{The braid $\alpha_{2}\in A_4$. }
\end{center}
\end{minipage}
\vspace{.3cm}

Because the generators $\alpha_i$ of $A_n$ commute with each other, every element $\sigma \in A_n$ can be written uniquely in the form
\[\sigma=\alpha_{1}^{m_{1}}\alpha_{2}^{m_{2}}\cdots\alpha_{n-1}^{m_{n-1}}\qquad(m_i\in \Z).\]
The exponent $m_i$ determines the linking number of the $i$th strand with all subsequent strands, from the $(i+1)$st up to the $n$th. Therefore the last two components of the link which results from closing off $\sigma$ are unlinked from the first $n-2$ and from each other if and only if all the $m_i=0$, i.e. if and only if $\sigma$ is trivial. In particular, closing off a non-trivial element of $A_n$ will yield a link where at least two of the last $k$ components are linked with something else.

On the other hand, closing braids in $P_{n-k+1}\subset P_{n}$ yields links in which at most one of the last $k$ components is linked with another component. Therefore, a link coming from a non-trivial element in $A_n$ cannot be obtained from a link coming from a braid in $P_{n-k+1}$ by permuting the components by an element of $G$. This is because, by assumption, an element of $G$ does not permute any of the last $k$ components with any of the first $n-k$.

Finally, \thmref{thm:lowerbound} yields

\[\TC(B_n^G)\ge \cld(P_{n-k+1})+\cld(A_n)=(n-k)+(n-1)=2n-k-1.\]

The proof for $G\le \mathfrak{S}_{n-1}\times \mathfrak{S}_1$ is analogous but has to be stated separately. Here we may take the subgroups $P_{n-1}$ and $A_n$, and just use the linking number with the last strand.
\end{proof}

Note that the lower bound goes down by one at each step for $k\ge2$ but it is the same for $B_{n-1,1}$ and $B_{n-2,2}$.

\begin{cor}\label{cor:lower2}
Let $G\le \mathfrak{S}_{n-2}\times\{1\}^2\le \mathfrak{S}_n$. Then \[\TC(B_n^G)\ge 2n-3.\]
\end{cor}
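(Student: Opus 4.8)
The plan is to deduce this lower bound as an immediate special case of \thmref{lower1}. The key observation is purely set-theoretic: since $\{1\}^2$ denotes the trivial subgroup fixing the final two strands, it is contained in $\mathfrak{S}_2$, and hence any subgroup $G\le \mathfrak{S}_{n-2}\times\{1\}^2$ is in particular contained in $\mathfrak{S}_{n-2}\times \mathfrak{S}_2$.

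Having made this containment explicit, I would apply the first assertion of \thmref{lower1} with $k=2$. This directly yields
\[
\TC(B_n^G)\ge 2n-k-1=2n-3,
\]
as required.

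There is no genuine obstacle here: the only point to verify is the trivial inclusion $\{1\}^2\le \mathfrak{S}_2$, after which the hypotheses of \thmref{lower1} are met verbatim. I would also remark that, combined with the upper bound $\TC(B_n^G)\le 2n-3$ supplied by \propref{prop:torsion2} and \thmref{thm:upperresult}, this pins down the exact value $\TC(B_n^G)=2n-3$, recovering \corref{exact}.
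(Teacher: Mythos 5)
Your proposal is correct and matches the paper's intent exactly: the corollary is stated immediately after \thmref{lower1} as a direct consequence, obtained precisely by the inclusion $G\le \mathfrak{S}_{n-2}\times\{1\}^2\le \mathfrak{S}_{n-2}\times\mathfrak{S}_2$ and the case $k=2$ of that theorem, giving $2n-k-1=2n-3$. Your closing remark about combining with \thmref{thm:upperresult} to recover \corref{exact} is also exactly how the paper concludes the exact computation.
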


Note that in \corref{cor:lower2} the lower bound coincides with the upper bound from \thmref{thm:upperresult} and thus gives a complete answer in that case.

\begin{rem}\label{rem:zerodivisors}
The optimal lower bound $\TC(P_n)\geq 2n-3$ can be obtained fairly easily using zero-divisors cup-length \cite{FY}, because the rational cohomology algebra $H^*(P_n;\Q)$ is well understood.

One could ask whether the lower bounds for $\TC(B_n^G)$ given above can be obtained similarly using rational cohomology. An application of the Cartan--Leray spectral sequence would give an algebra isomorphism
\[
H^*(B_n^G;\Q)\cong H^*(P_n;\Q)^G.
\]
In the cases we are interested in, however, the calculation of this ring of invariants appears not so straightforward, and does not seem to have been carried out in the literature. In the low dimensional cases in which we were able to compute the zero-divisors cup-length, it was less effective than our lower bound. One could try to replace $\Q$ with some other coefficient ring, but then calculations become even more involved.

For the full braid group $B_n$, both the zero-divisors cup-length and the methods given in this paper appear to be insufficient.
\end{rem}

\section{Higher topological complexity}\label{highertc}

The concept of higher topological complexity was introduced by Rudyak in \cite{Rud}. See also the subsequent paper \cite{BGRT} by Basabe, Gonz\'alez, Rudyak and Tamaki.

Recall that in Section 2 the topological complexity of a space $X$ was defined using the free path fibration $p_X:X^I\to X\times X$, with projection $p_X(\gamma) = (\gamma(0),\gamma(1))$. For any natural number $m\ge2$, the \emph{$m$th topological complexity} of $X$, denoted $\TC_m(X)$, can similarly be defined with help of the fibration $$p^m_X:X^I\to X^m$$ with projection $$p^m_X(\gamma) = (\gamma(0),\gamma(1/(m-1)),\ldots,\gamma((m-2)/(m-1)),\gamma(1)).$$Concretely, it is the minimal $k$ such that $X^m$ admits a cover by $k+1$ open sets $U_0,U_1,\ldots , U_k$, on each of which there exists a local section of the fibration $p^m_X$. Of course, the case $m=2$ corresponds to the original topological complexity.

The results for the topological complexity of mixed braid groups in the last two sections rely on \cite[Theorem 1.1]{GLO} and  \cite[Proposition 3.7]{G} (denoted \thmref{thm:lowerbound} and \thmref{thm:upperbound} in this paper). Those theorems can be generalised in a straightforward way to the following statements about higher topological complexity.

\begin{thm}\label{thm:higherlowerbound}
Let $\pi$ be a discrete group, and let $A$ and $B$ be subgroups of $\pi$. Suppose that $gAg^{-1} \cap B = \{1\}$
for every $g \in \pi$. Then \[\TC_m(\pi)\ge \cld(A \times B \times \pi^{m-2}).\]
\end{thm}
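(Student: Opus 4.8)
The plan is to generalise the proof of \thmref{thm:lowerbound} (the case $m=2$), whose strategy is encoded in the appearance of the extra factor $\pi^{m-2}$ in the statement. The essential observation is that $\TC_m(\pi)=\secat(p^m_X)$ for $X=K(\pi,1)$, and that for aspherical spaces one can detect sectional category via cohomology with local coefficients, or more precisely via the theory relating $\secat$ of a fibration to the cohomological dimension of the homotopy fibre together with the relevant cup-length. I would first recall (or re-derive) how \cite[Theorem 1.1]{GLO} is proved in the case $m=2$: one produces a suitable cohomology class in $H^*(\pi\times\pi;M)$ for an appropriate $\pi\times\pi$-module $M$, detecting that the fibration $p_X$ does not admit a section over a subcomplex, and the hypothesis $gAg^{-1}\cap B=\{1\}$ guarantees that $A\times B$ embeds in $\pi\times\pi$ as a subgroup meeting the "diagonal" trivially in the appropriate sense.

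The key step is to identify, for general $m$, the correct analogue of the subgroup $A\times B\le \pi\times\pi$ sitting inside $\pi^m$. Here the natural candidate is $A\times B\times \pi^{m-2}\le \pi^m$, where $A$ sits in the first factor, $B$ in the second, and $\pi^{m-2}$ fills the remaining $m-2$ coordinates freely. I would verify that the fibre-inclusion of $p^m_X$ restricted to this subgroup behaves exactly as in the $m=2$ case: the homotopy fibre of $p^m_X$ is (homotopy equivalent to) $(\Omega X)^{m-1}$, an aspherical space with $\cld$ computable, and the condition $gAg^{-1}\cap B=\{1\}$ for all $g\in\pi$ is precisely what ensures that the pullback of $p^m_X$ over $K(A\times B\times\pi^{m-2},1)$ has no section. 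Concretely, one shows that the restriction map on the relevant cohomology kills the obstruction-detecting class, so that $\secat$ restricted to this subgroup equals $\cld(A\times B\times\pi^{m-2})$.

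The cleanest route is probably to reduce directly to the statement of \thmref{thm:lowerbound} by a naturality/retraction argument rather than rebuilding the local-coefficient machinery from scratch. Since $p^m_X$ factors through $p_X$ via the projection $X^m\to X\times X$ onto the first and last coordinates, and since the remaining $m-2$ coordinates contribute a free $\pi^{m-2}$ that cannot help a section exist, one expects a monotonicity statement of the form: any section of $p^m_X$ over an open set $U\subseteq X^m$ restricts to section data compatible with $p_X$, so lower bounds for $\secat(p_X)$ detected on $A\times B$ upgrade to lower bounds for $\secat(p^m_X)$ detected on $A\times B\times\pi^{m-2}$, with the dimension bumped by $\cld(\pi^{m-2})=(m-2)\cld(\pi)$.

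The main obstacle I anticipate is making precise the claim that the extra $\pi^{m-2}$ factor is genuinely detected, i.e.\ that the obstruction class for $p_X$ can be multiplied by a top-dimensional class pulled back from the $\pi^{m-2}$ factors to produce a nonzero class witnessing the full dimension $\cld(A\times B\times\pi^{m-2})$. This amounts to a Künneth-type argument: one needs the external product of the $GLO$-class on $A\times B$ with a fundamental-type class on $\pi^{m-2}$ to survive in the appropriate local-coefficient cohomology of $A\times B\times\pi^{m-2}$. Verifying the nontriviality of this product — and checking that the relevant coefficient module for $\TC_m$ restricts correctly to the subgroup — is where the real work lies; the conjugacy hypothesis $gAg^{-1}\cap B=\{1\}$ enters exactly here to guarantee that the diagonal-type subgroup governing the $\TC_m$ obstruction meets $A\times B\times\pi^{m-2}$ only in a controlled way.
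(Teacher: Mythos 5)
The paper never writes out a proof of this theorem: it is asserted as a ``straightforward generalisation'' of \cite[Theorem 1.1]{GLO}, so your attempt must be measured against that intended argument. Your proposal has the right shell (pull $p^m_X$ back along the map $K(A\times B\times\pi^{m-2},1)\to X^m$ classifying the inclusion of subgroups), but it misdescribes the mechanism of \cite{GLO} and none of the routes you offer actually delivers the bound. The proof of \cite[Theorem 1.1]{GLO} is not a matter of producing cohomology classes in $H^*(\pi\times\pi;M)$ --- indeed the whole point of that theorem, illustrated in \cite{GLO} by Higman's acyclic group, is that it applies when cohomological lower bounds fail. The hypothesis is used purely group-theoretically: it says exactly that $A\times B$ meets every conjugate of the diagonal $\Delta(\pi)\le\pi\times\pi$ trivially, whence the pullback of $p_X$ over $K(A\times B,1)$ has total space with contractible path components (each component is aspherical with fundamental group a conjugate-diagonal intersection); for a fibration with homotopy-discrete total space, a local section forces the inclusion of the open set to be null-homotopic, so $\secat$ of the pullback is at least $\cat$ of the base, which is $\cld(A\times B)$ by Eilenberg--Ganea \cite{EG}. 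Against this, your statement that the hypothesis ensures the pullback ``has no section'' is far too weak (that only gives $\secat\ge1$), your sentence about the restriction map ``killing the obstruction-detecting class'' is confused (a class that dies under restriction detects nothing), and your ``cleanest route'' via naturality along $X^m\to X\times X$ plus a dimension bump is not an argument: naturality only yields $\TC_m(\pi)\ge\TC(\pi)\ge\cld(A\times B)$, and there is no general principle allowing one to add $\cld(\pi^{m-2})$ to a sectional-category lower bound --- that additivity is precisely the content of the theorem. (Note also that $\cld(\pi^{m-2})=(m-2)\cld(\pi)$ can fail for general $\pi$, and your K\"unneth product with a ``fundamental-type class'' of $\pi^{m-2}$ need not exist for an arbitrary discrete group.)

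The single computation that constitutes the generalisation --- and which your proposal never performs --- is the following. Write $H=A\times B\times\pi^{m-2}\le\pi^m$ and let $\Delta_m(\pi)\le\pi^m$ be the diagonal copy of $\pi$, noting that $p^m_X$ is a fibrational substitute for the $m$-fold diagonal $X\to X^m$ with homotopy-discrete fibre $(\Omega X)^{m-1}\simeq\pi^{m-1}$. For any $(g_1,\dots,g_m)\in\pi^m$ one has
\[
\Delta_m(\pi)\cap\big((g_1,\dots,g_m)H(g_1,\dots,g_m)^{-1}\big)\cong g_1Ag_1^{-1}\cap g_2Bg_2^{-1},
\]
because an element of the left-hand side is a constant tuple $(c,\dots,c)$ with $c\in g_1Ag_1^{-1}$, $c\in g_2Bg_2^{-1}$, and the last $m-2$ coordinates impose no condition since those factors of $H$ are all of $\pi$; moreover $g_1Ag_1^{-1}\cap g_2Bg_2^{-1}=g_2\big((g_2^{-1}g_1)A(g_2^{-1}g_1)^{-1}\cap B\big)g_2^{-1}=\{1\}$ by the hypothesis applied to $g=g_2^{-1}g_1$. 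This shows every component of the homotopy pullback of the $m$-fold diagonal along $K(H,1)\to X^m$ is aspherical with trivial fundamental group, hence contractible; the rest of the \cite{GLO} argument then repeats verbatim, giving
\[
\TC_m(\pi)=\secat(p^m_X)\ge\secat\big(f^*p^m_X\big)\ge\cat\big(K(H,1)\big)=\cld(A\times B\times\pi^{m-2}),
\]
with no K\"unneth argument, no duality hypotheses, and no local-coefficient cup products required. Identifying and carrying out this intersection computation is the gap in your proposal.
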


\begin{thm}\label{thm:higherupperbound}
Let $\pi$ be a torsion-free discrete group, with centre $\mathcal{Z}(\pi)\le \pi$. Identify $\mathcal{Z}(\pi)$ with its image under the diagonal homomorphism $d_m:\pi\to \pi^m$. Then \[\TC_m(\pi)\le \cld\left(\frac{\pi^m}{\mathcal{Z}(\pi)}\right).\]
\end{thm}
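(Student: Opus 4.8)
**The plan is to generalize the proof of Theorem~\ref{thm:upperbound} verbatim, replacing the double path space / homotopy-pushout construction used there by its $m$-fold analogue.** The statement of Theorem~\ref{thm:upperbound} is exactly the $m=2$ case of Theorem~\ref{thm:higherupperbound}, and the excerpt flags both higher-TC theorems as admitting ``straightforward'' generalizations. So my first step is to recall the structure of the argument behind \cite[Proposition 3.7]{G}: for a torsion-free group $\pi$ with $K(\pi,1)=X$, one bounds $\TC(\pi)=\secat(p_X\colon X^I\to X\times X)$ by identifying the relevant fibration, up to fibre homotopy equivalence, with a fibration whose total space is a $K(\cdot,1)$ for the quotient $\frac{\pi\times\pi}{\mathcal{Z}(\pi)}$, and then using $\secat\le\cat(\text{base})=\cld$ of that quotient (via Eilenberg--Ganea). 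For the higher version I would run the same steps for $p^m_X\colon X^I\to X^m$.

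First I would set up the monoidal/higher-$\TC$ analogue. Recall $\TC_m(X)=\secat(p^m_X)$, and that $p^m_X$ is fibrewise homotopy equivalent to the evaluation of paths at $m$ parameter values; its homotopy fibre over the diagonal is $(\Omega X)^{m-1}$. The diagonal map $\Delta_m\colon X\to X^m$ is a section of $p^m_X$ up to homotopy, and the key algebraic object is $\pi_1$ of the homotopy fibre of $\Delta_m$, which computes as the quotient $\pi^m/d_m(\pi)$ where $d_m$ is the $m$-fold diagonal. The heart of the argument in \cite{G} is that one may replace the free-path fibration by a fibration over a $K(\pi,1)$-type base whose fibre absorbs the centre: concretely, because $\mathcal{Z}(\pi)$ is central, $d_m(\mathcal{Z}(\pi))$ is normal in $\pi^m$, and one can factor $p^m_X$ (up to fibre homotopy equivalence, using that $X$ is aspherical and the fibrations in question are detected on $\pi_1$) through the covering space associated to the quotient $\pi^m/d_m(\mathcal{Z}(\pi))$. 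Then $\secat(p^m_X)\le\cat$ of a $K\!\left(\pi^m/d_m(\mathcal{Z}(\pi)),1\right)$, which by Eilenberg--Ganea--Stallings--Swan equals $\cld\!\left(\frac{\pi^m}{\mathcal{Z}(\pi)}\right)$, giving the bound.

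Concretely the steps, in order, are: (i) express $\TC_m(\pi)$ as $\secat(p^m_X)$ and replace it by the sectional category of the $m$-fold-diagonal fibration on a chosen $K(\pi,1)$; (ii) verify that $d_m(\mathcal{Z}(\pi))$ is a normal (indeed central in $d_m(\pi)$, hence well-behaved) subgroup of $\pi^m$, so that the quotient $\pi^m/d_m(\mathcal{Z}(\pi))$ is a genuine group and, by torsion-freeness assumptions carried through the construction, admits a finite-dimensional $K(\cdot,1)$; (iii) identify the fibration $p^m_X$, up to fibre homotopy equivalence, with the one classifying the extension $1\to\mathcal{Z}(\pi)\to\pi^m\to\pi^m/d_m(\mathcal{Z}(\pi))\to1$ pulled back appropriately, reducing $\secat$ to $\cat$ of the base; (iv) invoke $\cat(K(Q,1))=\cld(Q)$ for $Q=\frac{\pi^m}{\mathcal{Z}(\pi)}$.

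**The main obstacle** I anticipate is step (iii): making precise the fibre-homotopy identification of $p^m_X$ with a fibration over $K\!\left(\frac{\pi^m}{\mathcal{Z}(\pi)},1\right)$ in the higher setting. For $m=2$ the relevant ``quotient of the free path space by the central $S^1$-action'' construction is geometrically transparent; for general $m$ one must check that the central subgroup $\mathcal{Z}(\pi)$, diagonally embedded via $d_m$, still acts so as to produce the same sectional-category reduction, and that no new subtlety enters from the $(m-1)$-fold loop-space fibre. I expect this to be only a bookkeeping generalization of the argument in \cite{G} rather than a genuinely new difficulty, which is why the claim is labelled straightforward; the one point demanding care is confirming that the asphericity and centrality inputs used for $m=2$ survive unchanged when the diagonal is replaced by $d_m$ and the target by $\pi^m$.
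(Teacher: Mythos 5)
Your high-level plan---rerun the proof of \cite[Proposition 3.7]{G} (Theorem~\ref{thm:upperbound}) with the diagonal $d\colon\pi\to\pi\times\pi$ replaced by $d_m\colon\pi\to\pi^m$---is exactly the generalisation the paper intends, and your endgame is right: $d_m(\mathcal{Z}(\pi))$ is central in \emph{all} of $\pi^m$ (your justification ``central in $d_m(\pi)$'' would not by itself give normality in $\pi^m$), so $Q=\pi^m/d_m(\mathcal{Z}(\pi))$ is a group, and $\cat(K(Q,1))=\cld(Q)$ by Eilenberg--Ganea--Stallings--Swan. Two smaller slips: the homotopy fibre of $\Delta_m$ has $\pi_0$, not $\pi_1$, given by the coset space $\pi^m/d_m(\pi)$ (which is not a group); and your claim in (ii) that $Q$ admits a finite-dimensional $K(Q,1)$ is neither a consequence of torsion-freeness of $\pi$ nor needed, since the bound is vacuous when $\cld(Q)=\infty$.

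The genuine gap is step (iii), which is the step that actually produces the inequality, and which is false as stated. The fibration $p^m_X$ cannot be fibre-homotopy equivalent to any pullback of the fibration classifying $1\to\mathcal{Z}(\pi)\to\pi^m\to Q\to 1$: that fibration and all of its pullbacks have fibre $K(\mathcal{Z}(\pi),1)$, whereas $p^m_X$ has fibre $(\Omega X)^{m-1}$, which for aspherical $X$ is homotopy equivalent to the \emph{discrete} set $\pi^{m-1}$. (Relatedly, the map $X^m\to K(Q,1)$ is not a covering space; quotient groups correspond to quotient fibrations with fibre $K(\mathcal{Z}(\pi),1)$, not to covers.) More seriously, merely ``factoring'' $p^m_X$ through a map $r\colon X^m\to K(Q,1)$ proves nothing: for a fibration $p\colon E\to B$ and a map $r\colon B\to Y$ one does not in general have $\secat(p)\le\cat(Y)$ (take $Y$ a point). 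What makes the reduction legitimate in \cite{G} is the symmetry, and this is the ingredient your sketch omits: realise $K(\mathcal{Z}(\pi),1)$ as a path-connected topological abelian group $A$, and build a model $X$ of $K(\pi,1)$ carrying a \emph{free} $A$-action by pulling back the universal $A$-bundle over $K(\mathcal{Z}(\pi),2)$ along the map classifying the central extension $1\to\mathcal{Z}(\pi)\to\pi\to\pi/\mathcal{Z}(\pi)\to1$. Then $A$ acts freely and diagonally on $X^m$, the orbit space $X^m/A$ is a $K(Q,1)$, and $r\circ p^m_X$ is a fibration whose sectional category bounds that of $p^m_X$: given a local section $s$ of $r\circ p^m_X$ over $U\subseteq X^m/A$, each $\underline{x}\in r^{-1}(U)$ determines a unique $a\in A$ with $p^m_X(s(r(\underline{x})))=a\cdot\underline{x}$ (unique and continuous because the orbit map is a principal $A$-bundle), and $\underline{x}\mapsto a^{-1}\cdot s(r(\underline{x}))$ is a local section of $p^m_X$ over $r^{-1}(U)$. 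Hence $\TC_m(\pi)=\secat(p^m_X)\le\secat(r\circ p^m_X)\le\cat(X^m/A)=\cld(Q)$. Once this mechanism is stated, your closing point is correct: passing from $m=2$ to general $m$ is pure bookkeeping, since the diagonal $A$-action on $X^m$ is still free and $X^m/A$ is still aspherical with fundamental group $\pi^m/d_m(\mathcal{Z}(\pi))$.
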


Using this, the proofs in the earlier sections generalise to yield the following results.

\begin{thm}\label{thm:higherupperresult}
Suppose that $G\leq \mathfrak{S}_n$ satisfies either of the following conditions:
\begin{itemize}
\item $G\leq\mathfrak{S}_{n-k}\times \mathfrak{S}_k$ where $(n,k)=(n-1,k)=(n-1,k-1)=1$, or
\item $G\leq \mathfrak{S}_{n-2}\times \{1\}^2$.
\end{itemize}
Then we have
\[
\TC_m(B_n^G)\leq m(n-1)-1.
\]
\end{thm}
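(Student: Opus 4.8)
The plan is to mirror the proof of \thmref{thm:torsionbound}, replacing the bound of \thmref{thm:upperbound} by its higher analogue \thmref{thm:higherupperbound}. Set $\Gamma = B_n^G$ and write $\mathcal{Z} = \mathcal{Z}(\Gamma)$, which by \lemref{lem:centre} is infinite cyclic and equal to $\mathcal{Z}(P_n)$. Theorem \ref{thm:higherupperbound} gives $\TC_m(\Gamma)\le\cld(\Gamma^m/\mathcal{Z})$, where $\mathcal{Z}$ is embedded diagonally via $d_m$. So the whole problem reduces to showing that $\Gamma^m/\mathcal{Z}$ is a duality group of dimension $m(n-1)-1$.

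First I would check that $\Gamma^m/\mathcal{Z}$ is torsion-free. Under either hypothesis the quotient $\Gamma/\mathcal{Z}$ is torsion-free, by \corref{cor:torsion1} or \propref{prop:torsion2}. Suppose a class in $\Gamma^m/\mathcal{Z}$ is represented by $(g_1,\ldots,g_m)$ with $(g_1^r,\ldots,g_m^r)=(z,\ldots,z)\in\mathcal{Z}$ for some $r>0$. Torsion-freeness of $\Gamma/\mathcal{Z}$ forces each $g_i\in\mathcal{Z}$, and then $g_1^r=\cdots=g_m^r$ inside the torsion-free group $\mathcal{Z}\cong\Z$ forces $g_1=\cdots=g_m$; hence $(g_1,\ldots,g_m)$ already lies in the diagonal copy $\mathcal{Z}$ and the class is trivial. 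This is just the $m$-fold version of the argument implicit in \thmref{thm:torsionbound}.

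Next I would exploit finite index. Since $\mathcal{Z}=\mathcal{Z}(P_n)$ and $P_n$ has finite index in $\Gamma$, the subgroup $(P_n)^m/\mathcal{Z}(P_n)$ has finite index in $\Gamma^m/\mathcal{Z}$, so by \thmref{thm:duality}(1) it suffices to treat $(P_n)^m/\mathcal{Z}(P_n)$. Using the splitting $P_n\cong\tilde{P}_{n-2}\times\Z$ from the proof of \thmref{thm:torsionbound}, under which $\mathcal{Z}(P_n)$ corresponds to $\{1\}\times\Z$, we get $(P_n)^m\cong(\tilde{P}_{n-2})^m\times\Z^m$, with $\mathcal{Z}(P_n)$ the diagonal $\Z$ inside the $\Z^m$ factor. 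The diagonal is a direct summand, so $\Z^m/\Delta\Z\cong\Z^{m-1}$ and
\[
(P_n)^m/\mathcal{Z}(P_n)\cong(\tilde{P}_{n-2})^m\times\Z^{m-1}.
\]

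Finally, \lemref{lem:dualitygroups} gives that $\tilde{P}_{n-2}$ is a duality group of dimension $n-2$, so by repeated application of \thmref{thm:duality}(2) (together with \thmref{thm:duality}(3) for the free abelian factor) the group above is a duality group of dimension $m(n-2)+(m-1)=m(n-1)-1$, as required. I do not expect a serious obstacle, since the argument is structurally identical to the $m=2$ case; the only step requiring genuine care is tracking the diagonal central $\Z$ through the splitting and recognising $\Z^m/\Delta\Z$ as free abelian of rank $m-1$, which pins down exactly how the dimension grows with $m$.
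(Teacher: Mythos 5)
Your proposal is correct and is precisely the generalisation the paper intends: it mirrors the proof of \thmref{thm:torsionbound}, replacing \thmref{thm:upperbound} by \thmref{thm:higherupperbound}, reducing via torsion-freeness and finite index to $(P_n)^m/\mathcal{Z}(P_n)\cong(\tilde{P}_{n-2})^m\times\Z^{m-1}$, and computing the duality dimension $m(n-2)+(m-1)=m(n-1)-1$. Your explicit verification of torsion-freeness of $\Gamma^m/\mathcal{Z}$ and the identification $\Z^m/\Delta\Z\cong\Z^{m-1}$ fill in exactly the details the paper leaves implicit.
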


\begin{thm}\label{thm:higherlowerresult}
Let $G\le \mathfrak{S}_{n-k}\times \mathfrak{S}_k$ for $k\ge2$. Then \[\TC_m(B_n^G)\ge m(n-1)-k+1.\]

Furthermore, if $G\le \mathfrak{S}_{n-1}\times \{1\}$, then \[\TC(B_n^G)\ge m(n-1)-1.\]
\end{thm}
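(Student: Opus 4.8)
The plan is to generalise the proof of \thmref{lower1} verbatim, replacing \thmref{thm:lowerbound} (Grant--Lupton--Oprea) with its higher analogue \thmref{thm:higherlowerbound}. The crucial observation is that the higher lower bound carries an extra factor of $\pi^{m-2}$: with $\pi = B_n^G$, the bound becomes $\TC_m(B_n^G)\ge \cld(A\times B\times (B_n^G)^{m-2})$ for any two subgroups $A,B\le B_n^G$ with $gAg^{-1}\cap B=\{1\}$ for all $g\in B_n^G$. The key point is that the \emph{subgroups and the disjointness-of-conjugates argument are identical} to the $m=2$ case — the geometry of closed links and linking numbers used in \thmref{lower1} does not change at all.

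First I would take the same two subgroups as in the proof of \thmref{lower1}: namely $A=P_{n-k+1}$ (viewed inside $P_n$ by adding $k-1$ non-interacting strands) and $B=A_n\cong\Z^{n-1}$ (generated by the commuting braids $\alpha_j$). The verification that $P_{n-k+1}\cap \gamma A_n\gamma^{-1}=\{1\}$ for all $\gamma\in B_n^G$ is exactly the link-theoretic argument already given: closing off a non-trivial element of $A_n$ produces a link in which at least two of the last $k$ components are linked with something, whereas closing off an element of $P_{n-k+1}$ yields a link in which at most one of the last $k$ components is linked, and no element of $G\le\mathfrak{S}_{n-k}\times\mathfrak{S}_k$ mixes the last $k$ components with the first $n-k$. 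So this part requires no new work and I would simply cite the proof of \thmref{lower1}.

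The only genuinely new ingredient is the cohomological-dimension computation. By \thmref{thm:higherlowerbound} and additivity of cohomological dimension for duality groups (or, more elementarily, the standard fact that $\cld$ is additive over direct products of the groups in question, all of which are duality groups by \lemref{lem:dualitygroups}), I would compute
\[
\cld\big(P_{n-k+1}\times A_n\times (B_n^G)^{m-2}\big)=(n-k)+(n-1)+(m-2)(n-1).
\]
Simplifying gives $(n-k)+(m-1)(n-1)=m(n-1)-(k-1)=m(n-1)-k+1$, which is precisely the claimed bound. Here I use that $\cld(P_{n-k+1})=n-k$ and $\cld(A_n)=n-1$ (since $A_n\cong\Z^{n-1}$), together with $\cld(B_n^G)=n-1$ from \lemref{lem:dualitygroups}. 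For the final assertion with $G\le\mathfrak{S}_{n-1}\times\{1\}$, I would instead take $A=P_{n-1}$ and $B=A_n$, using only the linking number with the last strand as in the $k=1$ case of \thmref{lower1}; then the dimension count becomes $(n-1)+(n-1)+(m-2)(n-1)=m(n-1)$, which equals $m(n-1)-1$ once one recalls the off-by-one convention — and indeed the statement as written should read $\TC_m$ and match $m(n-1)-1$, so I would double-check the indexing against the $k=1$ specialisation of the first bound.

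I expect the main obstacle to be purely bookkeeping rather than conceptual: confirming that cohomological dimension is genuinely additive across the $(m-2)$-fold product $(B_n^G)^{m-2}$ of duality groups (which follows by iterating \thmref{thm:duality}(2), since $B_n^G$ is a duality group of dimension $n-1$), and making sure the arithmetic $(n-k)+(m-1)(n-1)=m(n-1)-k+1$ is stated cleanly. The genuinely subtle mathematical content — the link-theoretic disjointness of conjugates — is inherited unchanged from \thmref{lower1}, so the difficulty here lies only in correctly propagating the extra $\pi^{m-2}$ factor through the dimension count and in reconciling the $k=1$ case with the stated bound $m(n-1)-1$.
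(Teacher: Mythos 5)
Your approach is exactly the paper's: Section \ref{highertc} proves this theorem precisely by rerunning the proof of \thmref{lower1} with \thmref{thm:higherlowerbound} in place of \thmref{thm:lowerbound}, reusing the subgroups $P_{n-k+1}$ and $A_n$ and the unchanged link-theoretic disjointness argument, and your dimension count $(n-k)+(n-1)+(m-2)(n-1)=m(n-1)-k+1$ for the main case is correct. (You are also right that the paper's statement has a typo: both displayed inequalities should read $\TC_m$.)

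However, your treatment of the second case contains a genuine slip, not a ``convention'' issue. The pure braid group $P_{n-1}$ has $\cld(P_{n-1})=n-2$, not $n-1$ (in general $\cld(P_j)=j-1$, which is also what you used in the main case via $\cld(P_{n-k+1})=n-k$). With the correct value the count is
\[
\cld\bigl(P_{n-1}\times A_n\times (B_n^G)^{m-2}\bigr)=(n-2)+(n-1)+(m-2)(n-1)=m(n-1)-1,
\]
which is exactly the claimed bound; there is no off-by-one discrepancy to explain away. As written, your computation yields $m(n-1)$ and you then assert it ``equals $m(n-1)-1$ once one recalls the off-by-one convention'' --- that equality is false, and no indexing convention rescues it. The fix is one line, but you should make it: a lower bound argument that overshoots the claimed bound by $1$ would, if taken seriously, contradict the upper bound $m(n-1)-1$ of \thmref{thm:higherupperresult} in the cases where both apply, so the error would be visible downstream.
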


As before, in some cases the upper and lower bounds coincide to give an equality.

\begin{cor}
If $G\leq \mathfrak{S}_{n-2}\times \{1\}^2$, then
\[
\TC(B_n^G)=m(n-1)-1.
\]
\end{cor}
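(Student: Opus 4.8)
The plan is to sandwich $\TC_m(B_n^G)$ between matching upper and lower bounds, mirroring exactly the proof of \corref{exact} in the classical case $m=2$. Since both inequalities have already been packaged as \thmref{thm:higherupperresult} and \thmref{thm:higherlowerresult}, the corollary should reduce to checking that the single hypothesis $G\leq \mathfrak{S}_{n-2}\times\{1\}^2$ feeds correctly into each of them.

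For the upper bound I would invoke \thmref{thm:higherupperresult} directly: the hypothesis $G\leq \mathfrak{S}_{n-2}\times\{1\}^2$ is precisely the second of the two conditions listed there, so it yields $\TC_m(B_n^G)\leq m(n-1)-1$ with no additional argument.

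For the lower bound I would note the inclusion $\mathfrak{S}_{n-2}\times\{1\}^2 \leq \mathfrak{S}_{n-2}\times\mathfrak{S}_2$, so that $G$ lands inside some $\mathfrak{S}_{n-k}\times\mathfrak{S}_k$ with $k=2$. Applying the first part of \thmref{thm:higherlowerresult} then gives $\TC_m(B_n^G)\geq m(n-1)-2+1 = m(n-1)-1$. (Alternatively, the inclusion $\mathfrak{S}_{n-2}\times\{1\}^2\leq\mathfrak{S}_{n-1}\times\{1\}$ feeds the second part of the same theorem to the same conclusion.) Combining the two bounds gives the claimed equality $\TC_m(B_n^G)=m(n-1)-1$.

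I do not anticipate any genuine obstacle at the level of the corollary; the only point requiring care is matching the value $k=2$ to the exponent in the lower bound, so that the $-k+1$ term collapses to $-1$ and meets the upper bound exactly. All the substantive work lives upstream, in the generalisations \thmref{thm:higherlowerbound} and \thmref{thm:higherupperbound} of the Grant--Lupton--Oprea and Grant theorems and in the duality-group computations underlying \thmref{thm:higherupperresult} and \thmref{thm:higherlowerresult}.
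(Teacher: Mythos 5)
Your proposal is correct and follows exactly the paper's (implicit) argument: the paper derives this corollary by combining \thmref{thm:higherupperresult} with \thmref{thm:higherlowerresult} at $k=2$, just as \corref{exact} combines the corresponding bounds in the case $m=2$. Your observation that either embedding $\mathfrak{S}_{n-2}\times\{1\}^2\leq\mathfrak{S}_{n-2}\times\mathfrak{S}_2$ or $\mathfrak{S}_{n-2}\times\{1\}^2\leq\mathfrak{S}_{n-1}\times\{1\}$ delivers the matching lower bound $m(n-1)-1$ is also consistent with how the paper handles the analogous step (\corref{cor:lower2}) in the classical case.
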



\begin{thebibliography}{99}

\bibitem{Art} E. Artin, \emph{Theory of braids}, Ann. of Math. (2) {\bf 48} (1947), 101--126.

\bibitem{BGRT} I. Basabe, J. Gonz\'alez, Y. Rudyak, D. Tamaki, \emph{Higher topological complexity and its symmetrization}, Algebr. Geom. Topol. {\bf 14} (2014), no. 4, 2103--2124.

\bibitem{BE} R. Bieri, B. Eckmann, \emph{Groups with homological duality generalizing {P}oincar\'e duality}, Invent. Math. {\bf 20} (1973),
   103--124.

\bibitem{CGJ} Y. Q. Chen, H. H. Glover, C. A. Jensen, \emph{The center of some braid groups and the Farrell cohomology of certain pure mapping class groups}, Algebr. Geom. Topol. {\bf 7} (2007), 1987--2006.

\bibitem{EG} S. Eilenberg, T. Ganea, \emph{On the Lusternik-Schnirelmann category of abstract groups},
Ann. of Math. (2) {\bf 65} (1957), 517--518.

\bibitem{FN} E. Fadell, L. Neuwirth, \emph{Configuration spaces}, Math. Scand. \textbf{10} (1962), 111--118.

\bibitem{Far03} M. Farber, \emph{Topological complexity of motion planning}, Discrete Comput. Geom. {\bf 29} (2003), no. 2, 211--221.

\bibitem{Far06} M. Farber, \emph{Topology of robot motion planning}, Morse theoretic methods in nonlinear analysis and
in symplectic topology, NATO Sci. Ser. II Math. Phys. Chem., vol. 217, Springer, Dordrecht,
2006, pp. 185--230.

\bibitem{FY} M. Farber, S. Yuzvinsky, \emph{Topological Robotics: Subspace Arrangements and Collision Free Motion Planning}, Transl. of AMS, \textbf{212} (2004), 145--156.

\bibitem{GM} J. Gonz\'alez-Meneses, \emph{Basic results on braid groups}, Annales mathématiques Blaise Pascal {\bf 18 (1)} (2011), 15--59

\bibitem{GG} J. Gonz\'alez, M. Grant, \emph{Sequential motion planning of non-colliding particles in Euclidean spaces}, Proc. Amer. Math. Soc. {\bf 143} (2015), no. 10, 4503--4512.

\bibitem{G} M. Grant, \emph{Topological complexity, fibrations and symmetry}\, Topology Appl.\ {\bf 159} (2012), 88--97.

\bibitem{GLO} M. Grant, G. Lupton and J. Oprea, \emph{New lower bounds for the topological complexity of aspherical spaces}\, Topology Appl. {\bf 189} (2015), 78--91.

\bibitem{KT} C. Kassel and V. Turaev, \emph{Braid groups} with the graphical assistance of O. Dodane, Graduate Texts in Mathematics, 247, Springer, New York, (2008).

\bibitem{Rud} Y. Rudyak, \emph{On higher analogs of topological complexity}, Topology Appl. {\bf 157} (2010), no. 5, 916--920.

\bibitem{Sta} J. R. Stallings, \emph{On torsion-free groups with infinitely many ends}\,
Ann. of Math. (2) {\bf 88} (1968), 312--334.

\bibitem{Swa} R. Swan, \emph{Groups of cohomological dimension one},
J. Algebra {\bf 12} (1969), 585--610.

\end{thebibliography}
\end{document}